\newtheorem{theorem}{Theorem}
\theoremstyle{plain}
\newtheorem{conjecture}{Conjecture}
\newtheorem{corollary}{Corollary}
\newtheorem{lemma}{Lemma}
\newtheorem{proposition}{Proposition}
\numberwithin{equation}{section}
\begin{document}
\title[Rigidity Theorems]{Rigidity Theorems for Compact Manifolds with Boundary and Positive Ricci Curvature}
\author{Fengbo Hang}
\address{Courant Institute, 251 Mercer Street, New York, NY 10012}
\email{fengbo@cims.nyu.edu}
\author{Xiaodong Wang}
\address{Department of Mathematics, Michigan State University, East Lansing, MI 48864}
\email{xwang@math.msu.edu}

\begin{abstract}
We prove some boundary rigidity results for the hemisphere under a lower bound
for Ricci curvature. The main result can be viewed as the Ricci version of a
conjecture of Min-Oo.

\end{abstract}
\keywords{hemisphere, rigidity, Ricci curvature}
\subjclass{53C24, 58J32}
\maketitle

\section{\bigskip Introduction}

The positive mass theorem, first proved by Schoen-Yau \cite{SY1, SY2} and
later by Witten \cite{W} using spinors, is one of the profound results in
differential geometry. In the recent work of Shi-Tam, it is used in a novel
way to yield beautiful results on the boundary effect on compact Riemannian
manifolds with nonnegative scalar curvature. The following theorem is only a
special case of their main result.

\begin{theorem}
\label{ball}Let $\left(  M^{n},g\right)  $ be a compact Riemannian manifold
with boundary and scalar curvature $R\geq0$. If the boundary is isometric to
$\mathbb{S}^{n-1}$ and has mean curvature $n-1$, then $\left(  M^{n},g\right)
$ is isometric to the unit ball $\overline{\mathbb{B}^{n}}\subset
\mathbb{R}^{n}$. (If $n>7$ we need to assume that $M$ is spin.)
\end{theorem}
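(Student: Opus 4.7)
The natural approach is via Shi--Tam's coupling of the positive mass theorem to boundary data. The plan is to attach to $(M,g)$ an asymptotically flat exterior, constructed so that the glued manifold has nonnegative scalar curvature and finite ADM mass, and then to combine a monotonicity formula along the attached foliation with the rigidity case of the positive mass theorem.

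First I identify $\partial M$ with the unit sphere $\mathbb{S}^{n-1}\subset\mathbb{R}^n$ and foliate the exterior $\mathbb{R}^n\setminus\mathbb{B}^n$ by the coordinate spheres $\mathbb{S}^{n-1}_r$, whose Euclidean mean curvature is $H_0 = (n-1)/r$. On this exterior I put a warped metric of the form
\[
\bar g = u^2\, dr^2 + r^2\, g_{\mathbb{S}^{n-1}},
\]
where $u:[1,\infty)\times\mathbb{S}^{n-1}\to\mathbb{R}_{>0}$ is chosen to solve the Shi--Tam parabolic equation along the foliation that forces $R_{\bar g}\equiv 0$ on the exterior. The initial condition is prescribed by requiring that the mean curvature of $\mathbb{S}^{n-1}_1$ computed in $\bar g$ agrees with the mean curvature $n-1$ of $\partial M$ in $M$; since the $\bar g$-mean curvature of $\mathbb{S}^{n-1}_r$ is $H_0/u$, this amounts to $u\big|_{r=1}\equiv 1$.

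The main technical step, and the one I expect to be the principal obstacle, is to establish the four ingredients from Shi--Tam in this setting: (i) global existence for all $r\geq 1$ of the parabolic problem for $u$; (ii) asymptotic flatness of $\bar g$ with a finite, well-defined ADM mass $m(\bar g)$; (iii) a gluing statement asserting that the resulting metric on $M\cup_{\partial M}(\mathbb{R}^n\setminus\mathbb{B}^n)$, although merely Lipschitz across $\partial M$, has nonnegative scalar curvature in the weak sense needed to apply the positive mass theorem (this is exactly where the mean-curvature matching is used); and (iv) the monotonicity that
\[
m(r) := \int_{\mathbb{S}^{n-1}_r}(H_0 - H_{\bar g})\, d\sigma_0
\]
is non-increasing in $r$ and satisfies $\lim_{r\to\infty} m(r) = c_n\, m(\bar g)$ for an explicit positive constant $c_n$.

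Granted these four ingredients the conclusion is nearly immediate. The boundary condition $u\big|_{r=1}=1$ gives $m(1)=0$, so monotonicity yields $c_n\, m(\bar g) = \lim_{r\to\infty} m(r)\leq 0$. The positive mass theorem (in its spin version when $n>7$, as allowed by hypothesis) forces $m(\bar g)\geq 0$, hence $m(\bar g)=0$ and in fact $m(r)\equiv 0$ on $[1,\infty)$. The rigidity case of the positive mass theorem then identifies $(\mathbb{R}^n\setminus\overline{\mathbb{B}^n},\bar g)$ with the Euclidean exterior, and the equality case in the Shi--Tam monotonicity forces $u\equiv 1$ everywhere; so the gluing along $\partial M$ is smooth and the Euclidean exterior extends across $\partial M$ to give an isometry of $(M,g)$ with the closed unit ball $\overline{\mathbb{B}^n}$.
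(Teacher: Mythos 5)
Your proposal follows the same Shi--Tam route that the paper has in mind (the paper only sketches the argument and refers to Miao \cite{M} and Shi--Tam \cite{ST} for details). The one thing worth noticing is that the full Shi--Tam machinery is overkill here and collapses to the paper's shorter sketch: since the boundary mean curvature is exactly $n-1$, your initial condition is $u|_{r=1}\equiv 1$, and $u\equiv 1$ is then the unique solution of the parabolic equation enforcing $R_{\bar g}\equiv 0$ (the flat metric already has zero scalar curvature, and the flow has a uniqueness theorem). So $\bar g$ is exactly the Euclidean metric on $\mathbb{R}^n\setminus\mathbb{B}^n$, the glued manifold is flat outside a compact set, and the ADM mass is manifestly zero --- one never needs the monotonicity of $m(r)$ or the parabolic existence theory at all. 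What genuinely remains, and what the paper's sketch also rests on, is precisely your ingredient (iii): Miao's theorem that a metric with a Lipschitz corner along a hypersurface, with matching induced metrics and matching mean curvatures, has nonnegative scalar curvature in the distributional sense adequate for the positive mass theorem, together with the rigidity case of the positive mass theorem (Schoen--Yau for $n\leq 7$, Witten in the spin case for $n>7$, consistent with the stated hypothesis). So the proposal is correct and uses the same underlying idea, but can be streamlined: glue $M$ directly to the flat exterior, invoke Miao plus positive-mass rigidity to conclude the glued manifold is $\mathbb{R}^n$, and hence $M$ is $\overline{\mathbb{B}^n}$.
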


We sketch the idea of the proof. We glue $M$ with $\mathbb{R}^{n}%
\backslash\mathbb{B}^{n}$ along the boundary $\mathbb{S}^{n-1}$ to obtain an
asymptotically flat manifold $N$ with nonnegative scalar curvature. Since it
is actually flat near infinity the positive mass theorem implies that $N$ is
isometric to $\mathbb{R}^{n}$ and hence $M$ is isometric to $\overline
{\mathbb{B}^{n}}$ (see \cite{M, ST} for details). There are similar rigidity
results for geodesic balls in the hyperbolic space assuming $R\geq-n\left(
n-1\right)  $ by applying the positive mass theorem for asymptotically
hyperbolic manifolds.

It is a natural question to consider the hemisphere. The following conjecture
was proposed by Min-Oo in 1995.

\begin{conjecture}
(Min-Oo) Let $\left(  M^{n},g\right)  $ be a compact Riemannian manifold with
boundary and scalar curvature $R\geq n\left(  n-1\right)  $. If the boundary
is isometric to $\mathbb{S}^{n-1}$ and totally geodesic, then $\left(
M^{n},g\right)  $ is isometric to the hemisphere $\mathbb{S}_{+}^{n}$.
\end{conjecture}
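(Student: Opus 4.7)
The plan is a spinorial rigidity argument modeled on Friedrich's sharp Lichnerowicz eigenvalue estimate, adapted to a boundary value problem on $M$. The hemisphere $\mathbb{S}^n_+$ carries a full space of real Killing spinors $\psi$ satisfying $\nabla_X \psi = -\tfrac{1}{2} X\cdot\psi$, and these saturate the relevant eigenvalue inequalities. My goal is to show that under the hypotheses of the conjecture $M$ also admits such spinors; standard holonomy theory (Friedrich--B\"ar) then forces the sectional curvature of $M$ to be identically $1$, and the totally geodesic round boundary promotes this to the global isometry $M \cong \mathbb{S}^n_+$.

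To produce the spinors I would study the Dirac operator $D$ on $M$ subject to a chirality boundary condition $B$ of Hijazi--Montiel--Zhang type, defined through Clifford multiplication by the outward unit normal and an involution compatible with the spin structure of the boundary sphere. The target estimate is $\lambda_1^B(D) \geq n/2$, obtained via Friedrich's trick: integrate $|\widetilde\nabla\psi|^2$ with $\widetilde\nabla_X \psi = \nabla_X \psi + \tfrac12 X\cdot\psi$ against the Lichnerowicz identity $D^2 = \nabla^\ast\nabla + R/4$. The bulk integrand becomes $\tfrac14(R - n(n-1))|\psi|^2 \geq 0$; the mean-curvature boundary integrand vanishes by the totally geodesic hypothesis; the remaining Clifford contribution on the boundary is annihilated by $B$. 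Given this estimate, I would solve the boundary problem $D\psi = (n/2)\psi$ with $B\psi|_{\partial M} = \phi$ for each $\phi$ in the natural family of boundary-Dirac eigenspinors on $\mathbb{S}^{n-1}$ of eigenvalue $\pm(n-1)/2$ (the restrictions of the ambient Killing spinors of $\mathbb{S}^n_+$). Equality forces $\widetilde\nabla\psi \equiv 0$ and $R \equiv n(n-1)$, producing a maximal family of real Killing spinors on $M$, whence constant sectional curvature $1$ and, together with the prescribed boundary data, the isometry with $\mathbb{S}^n_+$. The spin hypothesis for $n > 7$ is needed precisely to set up the Dirac operator in this generality.

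The main obstacle is the eigenvalue estimate $\lambda_1^B(D) \geq n/2$ itself, under only a scalar curvature lower bound and a totally geodesic boundary. After the Friedrich manipulation, the residual boundary integrand produces terms involving tangential covariant derivatives of $\psi$ along $\partial M$; the chirality boundary condition controls only half of the boundary spinor, and without pointwise curvature control beyond the trace inequality there is no direct way to dominate the remaining component. I would attempt to resolve this by refining the boundary condition to a spectral Atiyah--Patodi--Singer type calibrated to the round geometry of $\mathbb{S}^{n-1}$, or by combining the Friedrich argument with an auxiliary conformal rescaling on a collar of $\partial M$ that absorbs the offending tangential derivatives into a positive pointwise term. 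Identifying the right boundary condition, and verifying that the resulting boundary value problem is simultaneously elliptic, self-adjoint, and compatible with the target family of boundary Killing spinors, is the crux of the problem.
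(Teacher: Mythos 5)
The statement you are attempting to prove is presented in the paper as a \emph{conjecture}, not a theorem. The paper explicitly says it ``seems quite difficult,'' records only partial results (citing \cite{HW} and \cite{E}), and then proves something else: Theorem \ref{main}, which replaces the scalar curvature hypothesis $R\geq n(n-1)$ by the much stronger pointwise Ricci hypothesis $\mathrm{Ric}\geq (n-1)g$, and relaxes ``totally geodesic'' to ``convex.'' That theorem is proved not by spinor methods but by showing $\lambda_{1}(M)=n$ for the Dirichlet Laplacian via a maximum-principle analysis of $\phi=|\nabla u|^{2}+u^{2}$, where $-\Delta u=nu$ with linear boundary data, and then invoking Reilly's rigidity for $\lambda_{1}=n$. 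So there is no paper proof to compare your proposal against.

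As for your proposal on its own terms: you have correctly located where the Friedrich--Hijazi--Montiel--Zhang scheme fails. The Lichnerowicz--Reilly boundary integrand for the modified connection $\widetilde\nabla$ contains a term built from the boundary Dirac operator and Clifford multiplication by the normal, and with only $R\geq n(n-1)$ in the bulk and a totally geodesic boundary there is no curvature input that dominates it; the chirality condition controls only the normal part of the boundary spinor, leaving the tangential covariant-derivative contribution unconstrained. This gap cannot be patched by APS boundary conditions or a conformal collar trick, because the conclusion itself is false: Min-Oo's conjecture as stated was disproved by Brendle, Marques, and Neves for $n\geq 3$, who constructed smooth metrics on $\mathbb{S}^{n}_{+}$ with $R>n(n-1)$, totally geodesic round boundary, and no isometry to the hemisphere. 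Any boundary condition strong enough to close your estimate would therefore contradict those examples. Two further remarks if you want to salvage the spinor route by targeting Theorem \ref{main} instead: (i) a scalar-curvature Lichnerowicz estimate does not see the Ricci hypothesis, so you would need a Hijazi-type modified connection that inserts the Ricci endomorphism, and the algebra at the boundary must then reproduce the role played by the convexity assumption $\Pi\geq 0$; (ii) you would be importing a spin hypothesis that the paper's Reilly argument does not need, so even a successful spinor proof would be strictly weaker than what the paper establishes unconditionally in all dimensions $n\geq 2$.

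Finally, a smaller inaccuracy: the restrictions of the Killing spinors of $\mathbb{S}^{n}_{+}$ to the equator $\mathbb{S}^{n-1}$ are eigenspinors of the intrinsic boundary Dirac operator with eigenvalue $\pm\frac{n-1}{2}$ only after splitting according to the chirality of the normal; treating them directly as admissible boundary data for your boundary value problem requires first checking that they lie in the range of the projection $B$, which is not automatic and is part of the ``compatibility'' you flag at the end.
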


The proof of Theorem \ref{ball} does not seem to work any more: there is no
positive mass theorem providing a miraculous passage from the compact manifold
in question to a noncompact manifold. As it stands this conjecture seems quite
difficult. There have only been some partial results in \cite{HW} and some
recent progress in dimension three in \cite{E}.

Motivated by Min-Oo's conjecture, we consider the rigidity of compact
Riemannian manifolds with boundary and positive Ricci curvature. Here is our
first result.

\begin{theorem}
\label{main}Let $\left(  M^{n},g\right)  $ ($n\geq2$) be a compact Riemannian
manifold with nonempty boundary $\Sigma=\partial M$. Suppose

\begin{itemize}
\item \textrm{$Ric$}$\geq\left(  n-1\right)  g,$

\item $\left(  \Sigma,g|_{\Sigma}\right)  $ is isometric to the standard
sphere $\mathbb{S}^{n-1}\subset\mathbb{R}^{n}$,

\item $\Sigma$ is convex in $M$ in the sense that its second fundamental form
is nonnegative.
\end{itemize}

Then $\left(  M^{n},g\right)  $ is isometric to the hemisphere $\mathbb{S}%
_{+}^{n}=\{x\in\mathbb{R}^{n+1}:|x|=1,x_{n+1}\geq0\}\subset\mathbb{R}^{n+1}$.
\end{theorem}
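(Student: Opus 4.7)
The plan is to reduce the statement to Reilly's eigenvalue rigidity by applying Reilly's integral formula to a carefully chosen Dirichlet extension of the first spherical harmonics on $\Sigma = \mathbb{S}^{n-1}$. The starting point is Reilly's 1977 estimate: under $\mathrm{Ric}\geq (n-1)g$ and $H=\mathrm{tr}(\mathrm{II})\geq 0$ (the latter following from $\mathrm{II}\geq 0$), the first Dirichlet eigenvalue satisfies $\lambda_1^{D}(M)\geq n$, with equality forcing $(M,g)\cong \mathbb{S}_+^n$. It therefore suffices to prove $\lambda_1^{D}(M)=n$.

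Let $\iota=(\iota_1,\ldots,\iota_n):\Sigma\hookrightarrow\mathbb{R}^n$ be the standard isometric embedding, so each $\iota_i$ is a first eigenfunction of $\Delta_\Sigma$ with eigenvalue $n-1$ and the pointwise identity $\sum_i \mathrm{II}(\nabla^\Sigma\iota_i,\nabla^\Sigma\iota_i)=H$ holds. If $n$ is a Dirichlet eigenvalue of $-\Delta$ on $M$, Reilly's rigidity applies immediately. Otherwise, for each $i$ uniquely solve $\Delta f_i+n f_i=0$ in $M$ with $f_i|_\Sigma=\iota_i$, and set $F=(f_1,\ldots,f_n)$ and $y=\partial_\nu F|_\Sigma$. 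Summing Reilly's formula over the components of $F$ and using the refined Cauchy--Schwarz $|\nabla^2 f_i|^2\geq(\Delta f_i)^2/n=nf_i^2$, the Ricci lower bound, the identities $\Delta_\Sigma\iota_i=-(n-1)\iota_i$ and $\sum_i\mathrm{II}(\nabla\iota_i,\nabla\iota_i)=H$, and integration by parts $\int|\nabla F|^2=n\int|F|^2+\int_\Sigma\langle\iota,y\rangle$, one is led to
\[
\int_\Sigma H\bigl(1+|y|^2\bigr)\,dA \leq (n-1)\int_\Sigma\langle\iota,y\rangle\,dA.
\]

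To close the argument, I would next analyze the scalar function $|F|^2$: it equals $1$ on $\Sigma$ and satisfies $\Delta|F|^2+2n|F|^2=2|\nabla F|^2$, with $\partial_\nu|F|^2=2\langle\iota,y\rangle$ on $\Sigma$. A maximum-principle/Hopf argument combined with the convexity $\mathrm{II}\geq 0$ should control the sign of $\langle\iota,y\rangle$ and produce a complementary inequality that forces equality throughout the Reilly chain displayed above. This in turn forces $\nabla^2 f_i=-f_ig$ pointwise for each $i$ (equality in the Cauchy--Schwarz step) and $\mathrm{Ric}(\nabla f_i,\nabla f_i)=(n-1)|\nabla f_i|^2$ along $\nabla f_i$. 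The Obata equation $\nabla^2 f_i=-f_ig$ (with nontrivial boundary data) then admits the standard rigidity conclusion; equivalently, the map $\tilde F=\bigl(F,\sqrt{1-|F|^2}\,\bigr):M\to\mathbb{R}^{n+1}$ is verified to be an isometric embedding onto $\mathbb{S}_+^n$.

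The main obstacle is this final rigidity step. The one-sided integral inequality above does not suffice on its own, and the full convexity $\mathrm{II}\geq 0$ must enter decisively here---more than merely guaranteeing $H\geq 0$. The delicate point is to locate the precise ingredient (a maximum principle on $|F|^2$, a second application of Reilly's formula to an auxiliary function, or a Pohozaev-type identity involving $\mathrm{II}$) that supplies the reverse inequality. That such an ingredient is nontrivial is underscored by the still-open status of Min-Oo's original scalar-curvature conjecture under analogous mean-convex hypotheses.
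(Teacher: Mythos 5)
Your reduction to Reilly's eigenvalue rigidity is exactly the paper's starting point, and the idea of extending the spherical harmonics by solving $-\Delta f_i = nf_i$ with Dirichlet data $\iota_i$ is also on the right track. But, as you honestly acknowledge, you are missing the decisive step, and the integral route you sketch cannot supply it. The problem with summing Reilly's formula over $i$ (or equivalently working with $|F|^2$) is that the only trace of the second fundamental form you ever see is $\sum_i \Pi(\nabla^\Sigma\iota_i,\nabla^\Sigma\iota_i)=H$; the averaging washes out precisely the pointwise convexity $\Pi\geq 0$ that you correctly sense must be used for more than $H\geq 0$.

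The paper's key idea is to work with a \emph{single} scalar extension $u=u_\alpha$ of a single linear function $f_\alpha=\sum_i\alpha_i\iota_i$ (for each $\alpha\in\mathbb{S}^{n-1}$), and with the auxiliary function $\phi_\alpha=|\nabla u_\alpha|^2+u_\alpha^2$. Bochner's formula, together with $-\Delta u_\alpha=nu_\alpha$, $\mathrm{Ric}\geq(n-1)g$, and the trace Cauchy--Schwarz inequality, shows that $\phi_\alpha$ is subharmonic. On the boundary $\phi_\alpha=1+\chi_\alpha^2$ with $\chi_\alpha=\partial_\nu u_\alpha$, and a direct computation gives
\[
\tfrac12\,\partial_\nu\phi_\alpha=\langle\nabla_\Sigma f_\alpha,\nabla_\Sigma\chi_\alpha\rangle-H\chi_\alpha^2-\Pi(\nabla_\Sigma f_\alpha,\nabla_\Sigma f_\alpha).
\]
Here the full tensor $\Pi$ appears, not merely its trace. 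A pointwise maximum principle (Hopf lemma) argument applied to the subharmonic $\phi_\alpha$---using $\Pi\geq 0$ in the last displayed term---forces $\phi_\alpha$ to be constant, $D^2u_\alpha=-u_\alpha g$, $\chi_\alpha$ constant, and $\Pi(\nabla_\Sigma f_\alpha,\nabla_\Sigma f_\alpha)\equiv 0$. Since this holds for every $\alpha$, the boundary is totally geodesic, and the intermediate value theorem produces an $\alpha$ with $\chi_\alpha\equiv 0$; the Obata-type equation $D^2u=-ug$ with Neumann data $\partial_\nu u=0$ then rebuilds the metric as $g=dr^2+\sin^2 r\,h_0$ and forces $M\cong\mathbb{S}_+^n$, contradicting $\lambda_1>n$. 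So the ingredient you are looking for is not a Pohozaev identity or a second Reilly formula but the Bochner-subharmonic function $\phi_\alpha$ analyzed \emph{pointwise} via the strong maximum principle, with the Hopf boundary point lemma being exactly where $\Pi\geq 0$ (rather than $H\geq 0$) enters.
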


Since there are different conventions for the second fundamental form and the
mean curvature in the literature, let us explain ours. Let $\nu$ be the outer
unit normal field of $\Sigma$ in $M$. For any $p\in\Sigma$, for any $X,Y\in
T_{p}\Sigma$ the second fundamental form is defined as%
\[
\Pi\left(  X,Y\right)  =\left\langle \nabla_{X}\nu,Y\right\rangle .
\]
The mean curvature is the trace of the second fundamental form.

Put in another way, the theorem says that for a compact manifold with
boundary, if we know that the boundary is $\mathbb{S}^{n-1}$(intrinsic
geometry on the boundary) and convex (some extrinsic geometry) then we
recognize the manifold as the hemisphere $\mathbb{S}_{+}^{n}$, provided
$\mathrm{Ric}\geq\left(  n-1\right)  g$. Theorem \ref{main} can be viewed as
the Ricci version of Min-Oo's conjecture. It is a strong evidence that
Min-Oo's conjecture should be true.

Shi-Tam \cite{ST} have also studied compact manifolds $\left(  M^{n},g\right)
$ whose boundaries isometrically embed in $\mathbb{R}^{n}$ as a convex
hypersurface. In our case we may consider compact Riemannian manifolds whose
boundaries isometrically embed as a hypersurface in $\mathbb{S}_{+}^{n}$. \ We
prove the following rigidity theorem in this more general case.

\begin{theorem}
\label{general}Let $\left(  M,g\right)  $ be a smooth compact Riemannian
manifold with boundary $\partial M=\Sigma$ and $\overline{\Omega}%
\subset\mathbb{S}_{+}^{n}$ is a compact domain with smooth boundary in the
open hemisphere. Suppose

\begin{itemize}
\item \textrm{$Ric$}$\geq\left(  n-1\right)  g,$

\item there is an isometric embedding $\iota:\left(  \Sigma,g_{\Sigma}\right)
\rightarrow\partial\overline{\Omega}$,

\item $\Pi\geq\Pi_{0}\circ\iota$, \ here $\Pi$ the second fundamental form of
$\Sigma$ in $M$ and $\Pi_{0}$ is the second fundamental form of $\partial
\overline{\Omega}$ in $\mathbb{S}_{+}^{n}.$
\end{itemize}

Then $\left(  M,g\right)  $ is isometric to $\left(  \overline{\Omega
},g_{\mathbb{S}_{+}^{n}}\right)  $.
\end{theorem}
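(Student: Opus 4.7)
My plan is to reduce Theorem \ref{general} to Theorem \ref{main} by a gluing construction. The key observation is that $\Pi\geq\Pi_{0}\circ\iota$ is exactly the second fundamental form jump condition required so that, when $M$ is glued along its boundary to the complementary region of $\overline{\Omega}$ in $\mathbb{S}_{+}^{n}$, the resulting manifold carries $\mathrm{Ric}\geq(n-1)g$ in a weak sense and has its remaining boundary equal to the totally geodesic equator---precisely the setup of Theorem \ref{main}.

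Concretely, set $N=\mathbb{S}_{+}^{n}\setminus\Omega$, a compact region with two boundary components, $\partial\overline{\Omega}$ on the inside and the equator $\mathbb{S}^{n-1}$ on the outside. Using $\iota$ I would form the topological manifold
\[
\widetilde{M}=M\cup_{\iota}N,
\]
equipped with the metric $\widetilde{g}$ that agrees with $g$ on $M$ and with the round metric on $N$. Because $\iota$ is an isometry of boundaries, $\widetilde{g}$ is $C^{0}$ across the gluing hypersurface $S\simeq\partial M\simeq\partial\overline{\Omega}$ and smooth elsewhere. Moreover $\partial\widetilde{M}=\mathbb{S}^{n-1}$ is isometric to the standard sphere and is totally geodesic in $\widetilde{M}$, hence in particular convex.

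Next I would verify that $\widetilde{g}$ satisfies $\mathrm{Ric}\geq(n-1)g$ across the gluing in an appropriate distributional sense. Off $S$ this holds smoothly on each piece. On $S$, the outward unit normal from the $M$ side is identified under $\iota$ with the inward unit normal of $N$, so the sum of the second fundamental forms of $S$ computed with the respective outward normals from the two sides equals $\Pi-\Pi_{0}\circ\iota\geq0$. This symmetric-tensor jump condition is precisely what is needed to regularize $\widetilde{g}$ in a tubular neighborhood of $S$ to a family of smooth metrics $g_{\varepsilon}\to\widetilde{g}$ with $\mathrm{Ric}(g_{\varepsilon})\geq(n-1-o(1))g_{\varepsilon}$. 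Applying Theorem \ref{main} to each $(\widetilde{M},g_{\varepsilon})$ and extracting a Cheeger--Gromov limit, one concludes $(\widetilde{M},\widetilde{g})\cong\mathbb{S}_{+}^{n}$. The cap $N$ already embeds isometrically into $\mathbb{S}_{+}^{n}$ as the corresponding fixed region and shares the pinned boundary equator, so this global isometry must restrict to an isometry between $M$ and $\overline{\Omega}$, which is the desired conclusion.

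The main obstacle is the smoothing step: producing a Ricci-preserving regularization of a $C^{0}$ metric whose only singularity is a positive-semidefinite jump in the second fundamental form across a smooth hypersurface. This is the Ricci analogue of Miao's metric smoothing in the Shi--Tam scalar-curvature setting; for the scalar case only the trace of the jump is relevant, whereas here the full matrix inequality $\Pi-\Pi_{0}\circ\iota\geq0$ is what allows the pointwise Ricci lower bound to survive a convolution-type smoothing. If such a smoothing is not directly available, an equivalent route would be to extend Theorem \ref{main} itself to $C^{0}$ metrics with the corresponding jump condition, bypassing the limiting step entirely.
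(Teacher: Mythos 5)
Your gluing construction is exactly the one the paper uses: set $N=\mathbb{S}_{+}^{n}\setminus\Omega$, identify $\partial M$ with $\partial\overline{\Omega}$ via $\iota$, obtain a manifold $P$ with a Lipschitz metric and boundary the equator, and observe that the jump condition $\Pi\geq\Pi_{0}\circ\iota$ is the natural positivity hypothesis across the corner. You have also correctly diagnosed the crux --- one has to deal with the corner --- and your closing sentence (extend the rigidity argument itself to Lipschitz-across-a-hypersurface metrics) is in fact what the paper does. But the route you actually propose, smoothing, has two genuine gaps that prevent the argument from closing.

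First, the existence of a Ricci-preserving regularization with $\mathrm{Ric}(g_{\varepsilon})\geq (n-1-o(1))g_{\varepsilon}$ is asserted, not proved. Miao's smoothing in the Shi--Tam setting combines a normal-direction mollification with a conformal fix-up that controls only the scalar curvature; there is no conformal trick that controls Ricci, and no off-the-shelf Ricci analogue of Miao's lemma. Your heuristic about the sign of the $A$-jump is plausible for the leading $1/\varepsilon$ term of $R(\cdot,\nu,\cdot,\nu)$, but a full argument must control all the subleading error terms (mixed derivatives of the mollifier, Gauss-equation contributions, etc.) uniformly, and that is precisely the hard analytic content you would need to supply.

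Second, and more decisively, even granting such a smoothing, the limiting step does not work as stated. Theorem~\ref{main} is a rigidity theorem that requires $\mathrm{Ric}\geq(n-1)g$ exactly together with the boundary being isometric to $\mathbb{S}^{n-1}$; it cannot be applied to a metric $g_{\varepsilon}$ with $\mathrm{Ric}(g_{\varepsilon})\geq(n-1-\varepsilon)g_{\varepsilon}$. Rescaling $g_{\varepsilon}$ to restore the curvature lower bound destroys the normalization of the boundary metric, so the hypotheses of Theorem~\ref{main} are never simultaneously satisfied along the approximating sequence, and there is nothing to pass to the limit. One would instead need a quantitative stability version of Theorem~\ref{main} (almost-hemisphere under almost-optimal hypotheses), which is a substantially harder and different theorem.

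What the paper actually does avoids both issues: it first extends Reilly's eigenvalue inequality and its rigidity to the glued manifold with Lipschitz metric (Theorem~\ref{thm5.2}), using Reilly's integral formula on each smooth piece and summing; the interface terms combine to $\int_{\Sigma}\langle (A_M - A_N)\nabla_\Sigma u,\nabla_\Sigma u\rangle\,dS + \int_\Sigma (H_M - H_N)(\partial_\nu u)^2\,dS$, both nonnegative by the hypothesis $\Pi\geq\Pi_{0}$. It then shows $\phi=|\nabla u|^{2}+u^{2}$ is distributionally subharmonic across $\Sigma$ using the same sign condition, and runs the maximum-principle and bootstrap-regularity argument of Section~4 directly on $P$. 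If you want to salvage your write-up, the cleanest path is to drop the smoothing and carry out your alternative route: redo the Bochner/Reilly computation on each piece, check that the corner terms have the right sign, and propagate the rigidity through the Lipschitz interface.
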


In dimension 2 it turns out that Theorem \ref{main} is essentially equivalent
to a result of Toponogov on the length of simple closed geodesics on a
strictly convex surface. This connection is discussed in Section 2 in which we
also present a different proof working only in dimension 2. This proof may
have some independent interest. It is also interesting to compare this two
dimensional argument, which is partly geometric and partly analytic, with the
unified proof of purely analytic nature presented in Section 4.

To prove Theorem \ref{general}, we have to generalize the proof of Theorem
\ref{main} to the more general context where we allow the metric to be
Lipschitz along a hypersurface. For this purpose we first establish Reilly's
theorem on the first eigenvalue in this context in Section 5. The proof of
Theorem \ref{general} is then given in Section 6.

\textbf{Acknowledgement: }The research of F. Hang is supported by National
Science Foundation Grant DMS-0647010 and a Sloan Research Fellowship. The
research of X. Wang is supported by National Science Foundation Grant
DMS-0505645. We would like to thank Christina Sormani for valuable discussions.

\section{\bigskip The two dimensional case}

When $n=2$ we consider a compact surface $(M^{2},g)$ with boundary. The
boundary then consists of closed curves and there is no intrinsic geometry
except the lengths of these curves. The extrinsic geometry of the boundary is
given by the geodesic curvature. Therefore Theorem \ref{main} follows from the
following slightly stronger result.

\begin{theorem}
Let $(M^{2},g)$ be compact surface with boundary and the Gaussian curvature
$K\geq1.$ Suppose the geodesic curvature $k$ of the boundary $\gamma$
satisfies $k\geq c$ $\geq0$. Then $L(\gamma)\leq2\pi/\sqrt{1+c^{2}}$. Moreover
equality holds iff $(M,g)$ is isometric to a disc of radius $\cot^{-1}(c)$ in
$\mathbb{S}^{2}$.

\begin{proof}
By Gauss-Bonnet formula
\[
2\pi\chi\left(  M\right)  =\int_{M}Kd\sigma+\int_{\gamma}kds>0,
\]
where $\chi\left(  M\right)  $ is the Euler number of $M$. Therefore $M$ is
simply connected and in particular $\gamma$ has only one component. By the
Riemann mapping theorem, $(M,g)$ is conformally equivalent to the unit disc
$\overline{\mathbb{B}}\mathbb{=}\left\{  z\in\mathbb{C}:\left\vert
z\right\vert \leq1\right\}  $. Without loss of generality, we take $(M,g)$ to
be $(\overline{\mathbb{B}},g=e^{2u}|dz|^{2})$ with $u\in C^{\infty}\left(
\overline{\mathbb{B}},\mathbb{R}\right)  $. By our assumptions we have%

\[
\left\{
\begin{array}
[c]{c}%
-\Delta u\geq e^{2u}\text{ on }\overline{\mathbb{B}},\\
\frac{\partial u}{\partial r}+1\geq ce^{u}\text{ \ on }\mathbb{S}^{1}%
\end{array}
\right.
\]
Let $\underline{u}\in C^{\infty}\left(  \overline{\mathbb{B}},\mathbb{R}%
\right)  $ such that%
\[
\left\{
\begin{array}
[c]{c}%
-\Delta\underline{u}=0\text{ on }B,\\
\left.  \underline{u}\right\vert _{\mathbb{S}^{1}}=\left.  u\right\vert
_{\mathbb{S}^{1}}.
\end{array}
\right.
\]
Then $\underline{u}\leq u$ as $u$ is superharmonic. It follows from sub-sup
solution method (see, e.g., \cite[page 187-189]{SY}) that we may find a $v\in
C^{\infty}\left(  \overline{\mathbb{B}},\mathbb{R}\right)  $ with%
\[
\left\{
\begin{array}
[c]{c}%
-\Delta v=e^{2v}\text{ on }\overline{\mathbb{B}},\\
\underline{u}\leq v\leq u.
\end{array}
\right.
\]
Since $v\leq u$ and $v|_{\mathbb{S}^{1}}=u|_{\mathbb{S}^{1}}$ we have
$\frac{\partial v}{\partial\nu}\geq\frac{\partial u}{\partial\nu}$ and hence
$\frac{\partial v}{\partial\nu}+1\geq ce^{u}$ on $\mathbb{S}^{1}$, i.e. the
boundary circle has has geodesic curvature $\geq c$. As the metric $\left(
\overline{\mathbb{B}},e^{2v}|dz|^{2}\right)  $ has curvature $1$ and the
boundary circle is convex, it can be isometrically embedded as a domain in
$\mathbb{S}^{2}$, say $\Omega$. Denote $\sigma=\partial\Omega$ parametrized by
arclength. Notice $L\left(  \sigma\right)  =L\left(  \gamma\right)  $ as $v=u$
on the boundary $\mathbb{S}^{1}$. Because the boundary has geodesic curvature
$\geq c\geq0$, it is known that the smallest geodesic disc $D$ containing
$\Omega$ has radius at most $\cot^{-1}(c)$. Hence $L\left(  \gamma\right)
=L(\sigma)\leq2\pi/\sqrt{1+c^{2}}=L\left(  \partial D\right)  $. The equality
case follows directly from the argument.
\end{proof}
\end{theorem}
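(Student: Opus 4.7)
The plan is to reduce the problem to the constant-curvature case via conformal uniformization and then invoke a classical Toponogov-type estimate on spherical convex curves.

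First I would settle the topology. Gauss--Bonnet with $K \geq 1$ and $k \geq 0$ gives $2\pi\chi(M) = \int_M K\, d\sigma + \int_\gamma k\, ds > 0$, which for a surface with boundary forces $\chi(M) = 1$. Hence $M$ is a topological disc and $\gamma$ is a single simple closed curve. By the Riemann mapping theorem I may then identify $(M,g)$ with $(\overline{\mathbb{B}}, e^{2u}|dz|^2)$ for some smooth $u$, and the two hypotheses translate into the PDE inequalities
\[
-\Delta u \geq e^{2u} \text{ on } \overline{\mathbb{B}}, \qquad \partial_r u + 1 \geq c\, e^u \text{ on } \mathbb{S}^1.
\]

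The extremal configuration corresponds to equalities in both, so I would try to replace $u$ by a nearby conformal factor $v$ solving $-\Delta v = e^{2v}$ on $\overline{\mathbb{B}}$ with the same boundary values $v|_{\mathbb{S}^1}=u|_{\mathbb{S}^1}$. Since $u$ itself is a supersolution and the harmonic extension of its boundary data is a subsolution by the maximum principle, a standard sub/supersolution iteration should yield such a $v$ with $v \leq u$. After this step I need to track two consequences on the circle: because $v = u$ on $\mathbb{S}^1$, the boundary length in the $v$-metric is exactly $L(\gamma)$; and because $v \leq u$ with equality on the boundary one has $\partial_r v \geq \partial_r u$, so the geodesic curvature in the $v$-metric is still at least $c$.

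The metric $e^{2v}|dz|^2$ has Gauss curvature identically $1$ and convex boundary, so a classical embedding result puts it isometrically into $\mathbb{S}^2$ as a domain $\Omega$ with $L(\partial\Omega) = L(\gamma)$ and geodesic curvature at least $c$. It then suffices to prove the following purely spherical statement: a smooth convex domain in $\mathbb{S}^2$ whose boundary curvature is $\geq c$ is contained in a geodesic disc of radius $\cot^{-1}(c)$, and hence has perimeter at most $2\pi\sin(\cot^{-1}(c)) = 2\pi/\sqrt{1+c^2}$.

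I expect this last spherical containment to be the main technical obstacle if one does not quote it as a black box: for $c=0$ it is the classical $\pi/2$ diameter bound on convex sets in $\mathbb{S}^2$, and for $c>0$ the natural approach is either to compare osculating circles (any disc of constant geodesic curvature $c$ internally tangent to $\partial\Omega$ is contained in $\Omega$) or to argue via a support-function computation against the model circle of constant curvature $c$. Once that step is in hand, equality propagates backwards: $L(\gamma) = 2\pi/\sqrt{1+c^2}$ forces $\Omega$ to be the round disc of radius $\cot^{-1}(c)$, which forces $v = u$ throughout $\overline{\mathbb{B}}$ and turns all inequalities into equalities, giving the desired isometry of $(M,g)$ with the geodesic disc.
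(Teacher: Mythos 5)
Your proposal follows the paper's proof essentially step for step: Gauss--Bonnet to get $\chi(M)=1$, Riemann mapping to reduce to a conformal factor on $\overline{\mathbb{B}}$, a sub/supersolution argument (with the harmonic extension of the boundary data as subsolution and $u$ as supersolution) producing a constant-curvature-$1$ metric $e^{2v}|dz|^2$ with the same boundary length and boundary geodesic curvature still $\geq c$, isometric embedding into $\mathbb{S}^2$, and then the classical spherical containment bound. The only difference in emphasis is that you explicitly flag the final spherical step as a quotable classical fact and sketch how one might prove it, whereas the paper simply cites it; the reduction itself is identical.
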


As a corollary we have the following theorem due to Toponogov.

\begin{corollary}
(Toponogov \cite{T}) Let $(M^{2},g)$ be a closed surface with Gaussian
curvature $K\geq1$. Then any simple closed geodesic in $M$ has length at most
$2\pi$. Moreover if there is one with length $2\pi$, then $M$ is isometric to
the standard sphere $\mathbb{S}^{2}$.
\end{corollary}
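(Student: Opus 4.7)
The plan is to reduce the corollary to the preceding theorem by cutting $M$ along $\gamma$ and applying that theorem with $c=0$ to each resulting piece. First I would pin down the topology of $M$: Gauss--Bonnet combined with $K\geq 1$ gives
\[
2\pi\chi(M) = \int_M K\,dA \geq \mathrm{Area}(M) > 0,
\]
so $\chi(M)>0$ and $M$ is homeomorphic to either $\mathbb{S}^{2}$ or $\mathbb{RP}^{2}$.

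In the case $M\cong\mathbb{S}^{2}$, the Jordan curve theorem ensures that $\gamma$ separates $M$ into two closed topological discs $D_{1},D_{2}$. Each $D_{i}$ is a compact surface with $K\geq 1$ and boundary $\gamma$ of geodesic curvature $k=0$, so the preceding theorem applied with $c=0$ yields $L(\gamma)\leq 2\pi$. Its equality clause then forces each $D_{i}$ to be isometric to a disc of radius $\cot^{-1}(0)=\pi/2$ in $\mathbb{S}^{2}$, i.e.\ a closed hemisphere; since the equator is totally geodesic on both sides, gluing the two hemispheres along it recovers the standard round $\mathbb{S}^{2}$.

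In the case $M\cong\mathbb{RP}^{2}$, I would first rule out that $\gamma$ is two-sided: a two-sided simple closed curve in $\mathbb{RP}^{2}$ separates it into a disc and a M\"obius band $\mu$, but Gauss--Bonnet on $\mu$ with $K\geq 1$ and $k=0$ on $\partial\mu$ would force $0=2\pi\chi(\mu)\geq\mathrm{Area}(\mu)>0$, a contradiction. Hence $\gamma$ is one-sided, and cutting $M$ along $\gamma$ produces a closed topological disc whose boundary is a single smooth closed geodesic double-covering $\gamma$, of length $2L(\gamma)$. Applying the preceding theorem to this disc then gives $2L(\gamma)\leq 2\pi$, so $L(\gamma)\leq\pi$, ruling out equality $L(\gamma)=2\pi$ in this case.

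The main delicate step is the $\mathbb{RP}^{2}$ analysis, specifically the claim that cutting along a one-sided simple closed geodesic yields a disc with a single smooth closed geodesic boundary of length $2L(\gamma)$; this follows from the standard picture of $\mathbb{RP}^{2}$ as a disc with antipodal boundary identifications together with the fact that the cut is a local isometry, so one can then invoke the preceding theorem as a black box.
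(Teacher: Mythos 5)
Your proof is correct and follows the same core strategy as the paper: cut $M$ along $\gamma$ and apply the preceding theorem with $c=0$ to the resulting pieces. In fact you are more careful than the paper, whose proof simply asserts that cutting along $\gamma$ yields two compact surfaces with common boundary $\gamma$ — a statement that tacitly assumes $\gamma$ separates, hence that $M\cong\mathbb{S}^2$ rather than $\mathbb{RP}^2$. You close this gap by first deriving $\chi(M)>0$ from Gauss--Bonnet, then disposing of $\mathbb{RP}^2$ in both subcases: a two-sided geodesic is excluded by Gauss--Bonnet on the Möbius-band complement, and a one-sided geodesic, after cutting, bounds a disc with geodesic boundary of length $2L(\gamma)$, so the theorem yields the even stronger bound $L(\gamma)\leq\pi$.
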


\begin{proof}
Suppose $\gamma$ is a simple close geodesic. We cut $M$ along $\gamma$ to
obtain two compact surfaces with the geodesic $\gamma$ as their common
boundary. The result follows from applying the previous theorem to either of
these two compact surfaces with boundary.
\end{proof}

\bigskip

Toponogov's original proof, as presented in Klingenberg \cite[page 297]{K}
uses his triangle comparison theorem. In applying the triangle comparison
theorem, which requires at least two minimizing geodesics, the difficulty is
to know how long a geodesic segment is minimizing without assuming an upper
bound for curvature. As the proof presented above, this difficulty is overcome
by using special features of two dimensional topology.

\section{\bigskip Conformal change of metrics}

Of course the conformal method is not very useful in higher dimensions. It may
still be of interest to record here what one can prove with it.

\begin{proposition}
Assume $\Omega\subset S_{+}^{n}$ is a smooth domain, $\widetilde{g}%
=u^{\frac{4}{n-2}}g_{S^{n}}$, $\left.  u\right\vert _{\partial\Omega}=1$,
$\widetilde{R}\geq R_{S^{n}}=n\left(  n-1\right)  $, then $u\geq1$,
$\widetilde{H}\leq H$. Moreover if equality holds somewhere, then
$\widetilde{g}=g_{S^{n}}$.
\end{proposition}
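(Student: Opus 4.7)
The plan is to reduce the semilinear scalar-curvature inequality for $u$ to a linear eigenvalue comparison via a convexity trick, then derive the mean-curvature bound from positivity of $u-1$ and obtain the rigidity clause by invoking the strong maximum principle and Hopf's boundary lemma.

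First, I will apply the standard conformal transformation rule for the scalar curvature of $\widetilde g = u^{4/(n-2)} g_{S^n}$. The assumption $\widetilde R \geq n(n-1)$ rearranges to
\[
-\Delta u \;\geq\; \tfrac{n(n-2)}{4}\bigl(u^{(n+2)/(n-2)} - u\bigr)\quad\text{in }\Omega,\qquad u\big|_{\partial\Omega}=1.
\]
Since the exponent $(n+2)/(n-2)>1$, the map $t\mapsto t^{(n+2)/(n-2)}$ is convex on $[0,\infty)$ and lies above its tangent line at $t=1$. Substituting this bound and setting $w:=u-1$, the inequality linearizes to
\[
-\Delta w \;\geq\; n\, w\quad\text{in }\Omega,\qquad w\big|_{\partial\Omega}=0.
\]
The first Dirichlet eigenvalue of the open hemisphere $S^n_+$ equals $n$, realized by the last ambient coordinate function, and because $\overline\Omega$ is compactly contained in the open hemisphere, strict domain monotonicity yields $\lambda_1(\Omega)>n$. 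Testing the linear inequality against $w_-:=\max(-w,0)\in H^1_0(\Omega)$ and integrating by parts gives $\int_\Omega |\nabla w_-|^2 \leq n \int_\Omega w_-^2$; the Rayleigh characterization then forces $w_-\equiv 0$, so $u\geq 1$.

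For the mean-curvature inequality I will use the conformal transformation law for $H$, which evaluated along $\{u=1\}$ reduces to
\[
\widetilde H = H + \tfrac{2(n-1)}{n-2}\,\partial_\nu u.
\]
Since $u$ attains its minimum value $1$ along $\partial\Omega$, the outer normal derivative is $\leq 0$, whence $\widetilde H\leq H$. For the rigidity clause, note that $w\geq 0$ together with $-\Delta w\geq nw\geq 0$ makes $w$ superharmonic. Interior equality $u(p)=1$ places an interior minimum of $w$, which the strong maximum principle promotes to $w\equiv 0$. Boundary equality $\widetilde H(p)=H(p)$ forces $\partial_\nu u(p)=0$; Hopf's boundary lemma for nonnegative superharmonic functions rules this out unless $w\equiv 0$. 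In either case $u\equiv 1$, hence $\widetilde g = g_{S^n}$.

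The step I expect to be most delicate is the linearization: the tangent-line bound produces the coefficient $n$ on the right-hand side precisely because the Yamabe exponent is the conformally critical one, and this matches the hemisphere's Dirichlet eigenvalue exactly. Any looser estimate of $u^{(n+2)/(n-2)} - u$ would miss this sharp constant, and any weakening of the hypothesis allowing $\overline\Omega$ to reach the equator would destroy the strict eigenvalue inequality on which the whole argument rests.
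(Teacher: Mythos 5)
Your proof is correct and takes a genuinely different, more elementary route than the paper's. Both start from the same conformal transformation law
\[
-\Delta u + \tfrac{n(n-2)}{4}\,u \;\geq\; \tfrac{n(n-2)}{4}\,u^{(n+2)/(n-2)},\qquad u|_{\partial\Omega}=1,
\]
but thereafter the arguments diverge. The paper truncates $u$ at $1$ (using Kato's inequality to show $\overline u=\min\{u,1\}$ still satisfies the inequality distributionally), extends the resulting subsolution by the constant $1$ to all of $\mathbb{S}_+^n$, passes through one more sub/supersolution comparison, and finally invokes the global conformal rigidity theorem for the hemisphere from \cite{HW} to force the extended function to be identically $1$. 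You instead linearize directly: since the map $t\mapsto t^{(n+2)/(n-2)}$ is convex, the tangent-line bound at $t=1$ gives $t^{(n+2)/(n-2)}-t\geq\frac{4}{n-2}(t-1)$, hence $-\Delta(u-1)\geq n(u-1)$, and testing against $(u-1)_-\in H^1_0(\Omega)$ together with $\lambda_1(\Omega)>n=\lambda_1(\mathbb{S}_+^n)$ forces $(u-1)_-\equiv0$. This is cleaner and self-contained: it avoids the truncation, the extension to the full hemisphere, and the appeal to \cite{HW} entirely, with the only external input being the first Dirichlet eigenvalue of the hemisphere and domain monotonicity. The rigidity step (strong maximum principle and Hopf's lemma applied to the nonnegative superharmonic function $u-1$) is essentially the same in both treatments. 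Two points worth making explicit in your write-up: first, the strict inequality $\lambda_1(\Omega)>n$ uses that $\overline\Omega$ is properly contained in the open hemisphere, which is the intended setting (compare the hypotheses of Theorem \ref{general}); second, even in the borderline case $\lambda_1(\Omega)=n$ the argument survives, since the convexity bound $t^{(n+2)/(n-2)}>1+\frac{n+2}{n-2}(t-1)$ is strict for $t\neq1$ and would force a strict inequality in the Rayleigh quotient whenever $(u-1)_-\not\equiv0$.
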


We have%
\[
\widetilde{R}=u^{-\frac{n+2}{n-2}}\left(  -\frac{4\left(  n-1\right)  }%
{n-2}\Delta u+n\left(  n-1\right)  u\right)  \geq n\left(  n-1\right)  ,
\]
in another way it is%
\[
-\Delta u+\frac{n\left(  n-2\right)  }{4}u\geq\frac{n\left(  n-2\right)  }%
{4}u^{\frac{n+2}{n-2}},\quad\left.  u\right\vert _{\partial\Omega}=1.
\]
Let $\overline{u}=\min\left\{  u,1\right\}  $, then $\overline{u}$ is
Lipschitz and it follows from Kato's inequality that in distribution sense%
\[
-\Delta\overline{u}\geq-\chi_{u<1}\Delta u\geq\frac{n\left(  n-2\right)  }%
{4}\left(  \overline{u}^{\frac{n+2}{n-2}}-\overline{u}\right)  ,\quad\left.
\overline{u}\right\vert _{\partial\Omega}=1.
\]
Indeed,
\[
\overline{u}=\frac{u+1}{2}-\frac{\left\vert u-1\right\vert }{2}.
\]
for $\varepsilon>0$, we let $f_{\varepsilon}\left(  t\right)  =\sqrt
{t^{2}+\varepsilon^{2}}$, then $\Delta\left(  f_{\varepsilon}\left(
u-1\right)  \right)  \geq f_{\varepsilon}^{\prime}\left(  u-1\right)  \Delta
u$, let $\varepsilon\rightarrow0^{+}$ it follows that%
\[
\Delta\left(  \left\vert u-1\right\vert \right)  \geq\operatorname{sgn}\left(
u-1\right)  \Delta u.
\]
Hence%
\[
-\Delta\overline{u}\geq-\frac{1}{2}\Delta u+\frac{1}{2}\operatorname{sgn}%
\left(  u-1\right)  \Delta u=-\chi_{u<1}\Delta u
\]
in distribution sense. Let $v\in C^{2}\left(  \overline{\Omega}\right)  $ such
that%
\[
\left\{
\begin{array}
[c]{l}%
-\Delta\overline{v}+\frac{n\left(  n-2\right)  }{4}\overline{v}=\frac{n\left(
n-2\right)  }{4}\overline{u}^{\frac{n+2}{n-2}},\\
\left.  \overline{v}\right\vert _{\partial\Omega}=1,
\end{array}
\right.
\]
then $\overline{v}\leq\overline{u}\leq1$ and hence $\frac{\partial\overline
{v}}{\partial\nu}\geq0$. Define%
\[
v\left(  x\right)  =\left\{
\begin{array}
[c]{l}%
\overline{v}\left(  x\right)  ,\text{ }x\in\Omega,\\
1,\text{ }x\notin\Omega.
\end{array}
\right.
\]
Then $v$ is Lipschitz and in the sense of distribution%
\[
-\Delta v+\frac{n\left(  n-2\right)  }{4}v\geq\frac{n\left(  n-2\right)  }%
{4}v^{\frac{n+2}{n-2}},\quad\left.  v\right\vert _{S^{n-1}}=1.
\]
Indeed for any nonnegative $\varphi\in C^{\infty}\left(  S_{+}^{n}\right)  $,
$\varphi=0$ near $S^{n-1}$, we have%
\begin{align*}
&  \int_{S_{+}^{n}}\left(  \nabla v\cdot\nabla\varphi+\frac{n\left(
n-2\right)  }{4}v\varphi\right)  d\mu\\
&  =\int_{\Omega}\left(  \nabla\overline{v}\cdot\nabla\varphi+\frac{n\left(
n-2\right)  }{4}\overline{v}\varphi\right)  d\mu+\int_{S_{+}^{n}%
\backslash\Omega}\frac{n\left(  n-2\right)  }{4}\varphi d\mu\\
&  \geq\int_{\partial\Omega}\frac{\partial\overline{v}}{\partial\nu}\varphi
dS+\int_{S_{+}^{n}}\frac{n\left(  n-2\right)  }{4}v^{\frac{n+2}{n-2}}d\mu\\
&  \geq\int_{S_{+}^{n}}\frac{n\left(  n-2\right)  }{4}v^{\frac{n+2}{n-2}}d\mu.
\end{align*}
Let $w\in C^{2}\left(  S_{+}^{n}\right)  $ satisfy%
\[
\left\{
\begin{array}
[c]{l}%
-\Delta w+\frac{n\left(  n-2\right)  }{4}w=\frac{n\left(  n-2\right)  }%
{4}v^{\frac{n+2}{n-2}}\\
\left.  w\right\vert _{S^{n-1}}=1
\end{array}
\right.  .
\]
It follows that $0\leq w\leq v$ and hence $-\Delta w+\frac{n\left(
n-2\right)  }{4}w\geq\frac{n\left(  n-2\right)  }{4}w^{\frac{n+2}{n-2}}$.
Using the conformal rigidity result in Hang and Wang \cite{HW} (Theorem 3.1 on p99)
 we see $w=1$. Hence
$v=1$ and $\overline{u}=1$. It follows that $u\geq1$. Note that%
\[
-\Delta u\geq\frac{n\left(  n-2\right)  }{4}\left(  u^{\frac{n+2}{n-2}%
}-u\right)  \geq0.
\]
Hence $u$ is superharmonic. It follows from strong maximum principle that
either $u\equiv1$ or $u>1$ in $\Omega$ and $\frac{\partial u}{\partial\nu}<0$.
The conclusion follows from%
\[
\widetilde{H}=\frac{2\left(  n-1\right)  }{n-2}\frac{\partial u}{\partial\nu
}+H.
\]

\section{\bigskip The proof of Theorem \ref{main}}

\bigskip We now present a proof of Theorem \ref{main} which works in any
dimension $n\geq2$. We first recall the following result due to Reilly.

\begin{theorem}
(Reilly \cite{R}) Let $\left(  M^{n},g\right)  $ be a compact Riemannian
manifold with nonempty boundary $\Sigma=\partial M$. Assume that
$\mathrm{Ric}\geq\left(  n-1\right)  g$ and the mean curvature of $\Sigma$ in
$M$ is nonnegative. Then the first (Dirichlet) eigenvalue $\lambda_{1}$ of
$-\Delta$ satisfies the inequality $\lambda_{1}\geq n$. Moreover $\lambda
_{1}=n$ iff $M$ is isometric to the standard hemisphere $\mathbb{S}_{+}%
^{n}\subset\mathbb{R}^{n+1}$.
\end{theorem}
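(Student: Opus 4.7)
The plan is to apply Reilly's formula to the first Dirichlet eigenfunction $f$ of $-\Delta$, i.e.\ $-\Delta f=\lambda_1 f$ with $f|_\Sigma=0$. Reilly's identity for $f\in C^\infty(\overline{M})$ reads
\[
\int_M\!\bigl[(\Delta f)^2-|\nabla^2 f|^2-\operatorname{Ric}(\nabla f,\nabla f)\bigr]\,dV=\int_\Sigma\!\Bigl[2\tfrac{\partial f}{\partial\nu}\Delta_\Sigma u+H\bigl(\tfrac{\partial f}{\partial\nu}\bigr)^{\!2}+\Pi(\nabla_\Sigma u,\nabla_\Sigma u)\Bigr]dA,
\]
where $u=f|_\Sigma$. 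Because $u\equiv 0$, the boundary integrand collapses to $H(\partial_\nu f)^2$, which is $\geq 0$ by the mean curvature hypothesis.

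Next I would bound the bulk integrand from above using the pointwise Cauchy--Schwarz inequality $|\nabla^2 f|^2\ge (\Delta f)^2/n$, together with $\operatorname{Ric}(\nabla f,\nabla f)\geq (n-1)|\nabla f|^2$. This yields
\[
\frac{n-1}{n}\int_M(\Delta f)^2\,dV-(n-1)\int_M|\nabla f|^2\,dV\geq\int_\Sigma H\bigl(\tfrac{\partial f}{\partial\nu}\bigr)^{\!2}dA\geq 0.
\]
Since $f$ vanishes on $\Sigma$, integration by parts gives $\int_M(\Delta f)^2=\lambda_1^2\int_M f^2$ and $\int_M|\nabla f|^2=\lambda_1\int_M f^2$. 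Substituting and dividing by $\int_M f^2>0$ produces $\frac{n-1}{n}\lambda_1^2-(n-1)\lambda_1\geq 0$, hence $\lambda_1\geq n$.

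For the rigidity case $\lambda_1=n$, every inequality used must be an equality. Equality in Cauchy--Schwarz forces $\nabla^2 f=\frac{\Delta f}{n}g=-f\,g$, so $f$ satisfies the Obata-type equation $\nabla^2 f+fg=0$ on $M$ with $f|_\Sigma=0$. The main obstacle is extracting the hemisphere structure from this equation on a manifold with boundary (rather than a closed one). I would handle it by the standard Obata trick: the vector field $X=\nabla f$ is conformal, and along an integral curve of $\nabla f/|\nabla f|$ starting from a critical point one shows $|\nabla f|^2+f^2$ is constant and the induced ODE for the metric along geodesics forces it to have the warped form $dt^2+\cos^2(t)\,g_{\mathbb{S}^{n-1}}$. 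Normalizing so that $\max f=1$, the set $\{f=1\}$ is a single interior maximum point from which geodesics foliate $M$, and the boundary $\Sigma=\{f=0\}$ is totally geodesic and isometric to $\mathbb{S}^{n-1}$; this identifies $(M,g)$ with $\mathbb{S}^n_+$. (This Obata argument on manifolds with boundary is classical, going back to Reilly's original paper, and is the one substantive step I would cite or reproduce in detail.)
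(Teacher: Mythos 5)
The paper itself does not prove this theorem: it is quoted from Reilly \cite{R} and used as a black box to establish Theorem \ref{main}. However, Section 5's Theorem \ref{thm5.2}, a generalization to metrics that are only Lipschitz across an interior hypersurface, is proved there by exactly the Reilly-formula argument you give here --- apply Reilly's formula to the first Dirichlet eigenfunction, bound $|D^2 u|^2 \geq (\Delta u)^2/n$ and $\mathrm{Ric}\geq(n-1)g$ to conclude $\lambda_1\geq n$, then in the equality case derive the Obata equation $D^2 u=-ug$ --- so your proposal is consistent with the paper's methodology. Your derivation of the eigenvalue bound is complete and correct, including the reduction of the boundary integrand to $H(\partial_\nu f)^2\geq 0$ because $f|_\Sigma=0$.

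Your rigidity discussion is correct in outline but sketchier than a self-contained proof requires, and a few of the steps you defer to as ``classical'' deserve to be stated precisely. First, $|\nabla f|^2+f^2$ is constant globally (not merely along integral curves): differentiating gives $\nabla(|\nabla f|^2+f^2)=2D^2f(\nabla f,\cdot)+2f\,df=0$ once $D^2f=-fg$. Second, to see $\Sigma$ is totally geodesic one needs $\partial_\nu f\neq 0$ on $\Sigma$; this comes from the Hopf lemma ($f>0$ inside, $f=0$ on $\Sigma$), and then $0=D^2f(X,Y)=-\Pi(X,Y)\,\partial_\nu f$ for tangential $X,Y$ forces $\Pi=0$. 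Third, showing the interior maximum of $f$ is a single point and that the integral curves of $\nabla f/|\nabla f|$ foliate $M$ into unit-speed geodesics yielding $g=dr^2+\sin^2 r\,g_{\mathbb{S}^{n-1}}$ requires the connectedness and warped-product bookkeeping that the paper carries out in Sections 4 and 5. None of these gaps is fatal, but they are precisely the substantive content of the rigidity half and should be written out rather than gestured at.
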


Therefore to prove Theorem \ref{main}, it suffices to show $\lambda_{1}\left(
M\right)  =n$. If this were not the case, then $\lambda_{1}\left(  M\right)
>n$. Therefore for every $f\in C^{\infty}\left(  \Sigma\right)  $ there is a
unique $u\in C^{\infty}\left(  M\right)  $ solving%
\begin{equation}
\left\{
\begin{array}
[c]{ccc}%
-\Delta u=nu & \text{on} & M,\\
u=f & \text{on} & \Sigma.
\end{array}
\right.  \label{bv}%
\end{equation}
Define
\[
\phi=\left\vert \nabla u\right\vert ^{2}+u^{2}.
\]

\begin{lemma}
\label{sub}\bigskip\ $\phi$ is subharmonic, i.e. $\Delta\phi\geq0$.
\end{lemma}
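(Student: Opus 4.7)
The natural approach is a straight Bochner computation. I would begin by applying the Bochner--Weitzenb\"ock formula to $u$:
\[
\tfrac{1}{2}\Delta |\nabla u|^{2}
= |\nabla^{2} u|^{2} + \langle \nabla u,\nabla \Delta u\rangle + \mathrm{Ric}(\nabla u,\nabla u).
\]
Since $\Delta u = -nu$, the middle term becomes $-n|\nabla u|^{2}$, while the Ricci hypothesis $\mathrm{Ric}\geq (n-1)g$ bounds the last term below by $(n-1)|\nabla u|^{2}$. Thus
\[
\tfrac{1}{2}\Delta |\nabla u|^{2} \geq |\nabla^{2}u|^{2} - |\nabla u|^{2}.
\]

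Next I would compute $\Delta u^{2}$ directly: using $\Delta u^{2}=2u\Delta u+2|\nabla u|^{2}$ and $\Delta u=-nu$, this gives
\[
\tfrac{1}{2}\Delta u^{2} = |\nabla u|^{2} - nu^{2}.
\]
Adding the two identities produces the clean inequality
\[
\tfrac{1}{2}\Delta \phi \;\geq\; |\nabla^{2} u|^{2} - n u^{2}.
\]

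The final step is to absorb the $-nu^{2}$ term using the standard trace inequality for symmetric $2$-tensors on an $n$-manifold:
\[
|\nabla^{2}u|^{2} \;\geq\; \tfrac{1}{n}(\Delta u)^{2} \;=\; n u^{2}.
\]
This yields $\Delta \phi \geq 0$. There is no real obstacle here: the only thing to be careful about is keeping the sign conventions straight and noting that the $(n-1)|\nabla u|^{2}$ term from Ricci is precisely what is needed to cancel against $-n|\nabla u|^{2}$ from $\langle \nabla u,\nabla\Delta u\rangle$, leaving the exact $|\nabla u|^{2}$ that pairs with the $-nu^{2}$ coming from $\tfrac{1}{2}\Delta u^{2}$ so that Cauchy--Schwarz closes the estimate. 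The equality case $|\nabla^{2}u|^{2}=\tfrac{1}{n}(\Delta u)^{2}$ (forcing $\nabla^{2}u = -u\,g$) will be relevant later in the rigidity argument, but is not needed for the subharmonicity assertion itself.
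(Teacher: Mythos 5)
Your proposal is correct and follows exactly the argument in the paper: Bochner formula plus $\Delta u=-nu$, the Ricci lower bound to absorb the gradient terms, and the trace inequality $|D^2u|^2\geq (\Delta u)^2/n$ to close the estimate. The only cosmetic difference is that you split the computation into $\Delta|\nabla u|^2$ and $\Delta u^2$ before recombining, whereas the paper writes the combined identity for $\tfrac{1}{2}\Delta\phi$ in a single line.
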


\begin{proof}
Using the Bochner formula, the equation (\ref{bv}) and the assumption
\textrm{$Ric$}$\geq\left(  n-1\right)  g$,
\begin{align*}
\frac{1}{2}\Delta\phi &  =\left\vert D^{2}u\right\vert ^{2}+\left\langle
\nabla u,\nabla\Delta u\right\rangle +\mathrm{Ric}(\nabla u,\nabla
u)+\left\vert \nabla u\right\vert ^{2}+u\Delta u\\
&  \geq\left\vert D^{2}u\right\vert ^{2}-nu^{2}\\
&  \geq\frac{\left(  \Delta u\right)  ^{2}}{n}-nu^{2}\\
&  =0.
\end{align*}

\end{proof}

\bigskip

Denote $\chi=\frac{\partial u}{\partial\nu}$, the derivative on the boundary
in the direction of the outer unit normal field $\nu$. By the assumption of
Theorem \ref{main} there is an isometry $F:\left(  \Sigma,g|_{\Sigma}\right)
\rightarrow\mathbb{S}^{n-1}\subset\mathbb{R}^{n}$. In the following let
$f=\sum_{i=1}^{n}\alpha_{i}x_{i}\circ F$, where $x_{1},\cdots,x_{n}$ are the
standard coordinate functions on $\mathbb{S}^{n-1}$ and $\alpha=\left(
\alpha_{1},\cdots,\alpha_{n}\right)  \in\mathbb{S}^{n-1}$. We have%
\[
-\Delta_{\Sigma}f=\left(  n-1\right)  f,\text{ \ \ }\left\vert \nabla_{\Sigma
}f\right\vert ^{2}+f^{2}=1.
\]
Hence
\begin{equation}
\phi|_{\Sigma}=\left\vert \nabla_{\Sigma}f\right\vert ^{2}+\chi^{2}%
+f^{2}=1+\chi^{2}. \label{fb}%
\end{equation}
On the boundary $\Sigma$%
\[
-nf=\Delta u|_{\Sigma}=\Delta_{\Sigma}f+H\chi+D^{2}u\left(  \nu,\nu\right)
=-\left(  n-1\right)  f+H\chi+D^{2}u\left(  \nu,\nu\right)  ,
\]
whence
\begin{equation}
D^{2}u\left(  \nu,\nu\right)  +f=-H\chi. \label{d2un}%
\end{equation}

\begin{lemma}
\label{nf}\bigskip On $\Sigma$%
\[
\frac{1}{2}\frac{\partial\phi}{\partial\nu}=\left\langle \nabla_{\Sigma
}f,\nabla_{\Sigma}\chi\right\rangle -H\chi^{2}-\Pi\left(  \nabla_{\Sigma
}f,\nabla_{\Sigma}f\right)  .
\]

\end{lemma}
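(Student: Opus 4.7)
The plan is a direct computation starting from the definition $\phi = |\nabla u|^2 + u^2$, together with the tangential/normal decomposition of $\nabla u$ on $\Sigma$ and the Hessian identity (\ref{d2un}) already established on the boundary.

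First I would compute
\[
\tfrac12\frac{\partial\phi}{\partial\nu} = D^2u(\nu,\nabla u) + u\chi.
\]
On $\Sigma$ we have $u = f$ and $\nabla u = \nabla_\Sigma f + \chi\nu$, so that
\[
D^2u(\nu,\nabla u) = D^2u(\nu,\nabla_\Sigma f) + \chi\, D^2u(\nu,\nu).
\]
Thus the task reduces to evaluating these two Hessian terms in terms of boundary quantities.

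For the tangential-normal piece, I would use symmetry of the Hessian and the definition of $\Pi$: for any $X \in T\Sigma$,
\[
D^2u(\nu,X) = \langle \nabla_X\nabla u,\nu\rangle = X\langle \nabla u,\nu\rangle - \langle \nabla u,\nabla_X\nu\rangle = X\chi - \Pi(X,\nabla_\Sigma f),
\]
where I used $\langle \nu,\nabla_X\nu\rangle = 0$ and the tangential part of $\nabla u$ is $\nabla_\Sigma f$. Setting $X = \nabla_\Sigma f$ yields
\[
D^2u(\nu,\nabla_\Sigma f) = \langle \nabla_\Sigma f,\nabla_\Sigma\chi\rangle - \Pi(\nabla_\Sigma f,\nabla_\Sigma f).
\]
For the normal-normal piece, identity (\ref{d2un}) gives directly $D^2u(\nu,\nu) = -H\chi - f$, so
\[
\chi\,D^2u(\nu,\nu) = -H\chi^2 - f\chi.
\]

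Finally, since $u\chi = f\chi$ on $\Sigma$, the term $f\chi$ cancels the $u\chi$ coming from differentiating $u^2$, and combining everything yields exactly
\[
\tfrac12\frac{\partial\phi}{\partial\nu} = \langle \nabla_\Sigma f,\nabla_\Sigma\chi\rangle - H\chi^2 - \Pi(\nabla_\Sigma f,\nabla_\Sigma f).
\]
There is no serious obstacle; the only thing to be careful about is the sign convention for $\Pi$ and the identification of the tangential component of $\nabla u$ with $\nabla_\Sigma f$ (which follows from $u|_\Sigma = f$). Both fit the conventions fixed after Theorem \ref{main}.
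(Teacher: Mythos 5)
Your proof is correct and follows essentially the same route as the paper: expand $\tfrac12\partial_\nu\phi = D^2u(\nu,\nabla u)+u\chi$, split $\nabla u$ into tangential and normal parts, handle the tangential--normal Hessian term via symmetry and the definition of $\Pi$, and use identity (\ref{d2un}) for the normal--normal term. The only cosmetic difference is that the paper groups the $f\chi$ contribution with $D^2u(\nu,\nu)$ before invoking (\ref{d2un}), whereas you carry the $u\chi$ term along and observe the cancellation at the end.
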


\begin{proof}
Indeed%
\begin{align*}
\frac{1}{2}\frac{\partial\phi}{\partial\nu}  &  =D^{2}u\left(  \nabla
u,\nu\right)  +f\chi\\
&  =D^{2}u\left(  \nabla_{\Sigma}u,\nu\right)  +\chi\left(  D^{2}u\left(
\nu,\nu\right)  +f\right) \\
&  =D^{2}u\left(  \nabla_{\Sigma}f,\nu\right)  -H\chi^{2},
\end{align*}
here we have used (\ref{d2un}) in the last step. On the other hand%
\begin{align*}
D^{2}u\left(  \nabla_{\Sigma}f,\nu\right)   &  =\left\langle \nabla
_{\nabla_{\Sigma}f}\nabla u,\nu\right\rangle \\
&  =\nabla_{\Sigma}f\left\langle \nabla u,\nu\right\rangle -\left\langle
\nabla u,\nabla_{\nabla_{\Sigma}f}\nu\right\rangle \\
&  =\left\langle \nabla_{\Sigma}f,\nabla_{\Sigma}\chi\right\rangle -\Pi\left(
\nabla_{\Sigma}f,\nabla_{\Sigma}f\right)  .
\end{align*}
The lemma follows.
\end{proof}

\begin{lemma}
\bigskip The function $\phi=\left\vert \nabla u\right\vert ^{2}+u^{2}$ is
constant and
\[
D^{2}u=-ug.
\]
Moreover $\chi=\frac{\partial u}{\partial\nu}$ is also constant and
$\Pi\left(  \nabla_{\Sigma}f,\nabla_{\Sigma}f\right)  \equiv0$.
\end{lemma}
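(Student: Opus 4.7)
The plan is to show $\phi$ is constant via the Hopf boundary point lemma, then read off the remaining conclusions by substituting back into the Bochner identity and Lemma~\ref{nf}.

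First, I would locate a maximum point. Since $\phi$ is subharmonic by Lemma~\ref{sub}, the maximum principle places the maximum of $\phi$ on $\bar{M}$ at some $p_0 \in \Sigma$. By (\ref{fb}) we have $\phi|_\Sigma = 1 + \chi^2$, so $p_0$ is simultaneously a maximum of $\chi^2$ on $\Sigma$; hence $\nabla_\Sigma(\chi^2)(p_0) = 2\chi(p_0)\nabla_\Sigma\chi(p_0) = 0$. Either $\chi(p_0) \neq 0$, giving $\nabla_\Sigma\chi(p_0) = 0$, or $\chi(p_0) = 0$, in which case $\chi \equiv 0$ on $\Sigma$ since $\chi^2 \geq 0$ already attains its maximum $0$ there. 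Either way the inner-product term in Lemma~\ref{nf} vanishes at $p_0$, and combined with $H \geq 0$ (which follows from $\Pi \geq 0$ since $H = \mathrm{tr}(\Pi)$) and $\Pi \geq 0$, I would conclude
\[
\frac{1}{2}\frac{\partial\phi}{\partial\nu}(p_0) \;=\; -\,H(p_0)\,\chi(p_0)^2 \;-\; \Pi(\nabla_\Sigma f,\nabla_\Sigma f)(p_0) \;\leq\; 0.
\]

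Next I would invoke the Hopf boundary point lemma: applied to the subharmonic $\phi$, smooth up to the boundary, at a boundary maximum $p_0$, it asserts $\partial\phi/\partial\nu(p_0) > 0$ unless $\phi$ is already constant on $M$. The previous inequality therefore forces $\phi \equiv \mathrm{const}$, and in particular $\Delta\phi \equiv 0$. Tracing back through the Bochner computation in Lemma~\ref{sub}, every intermediate inequality must be an equality; in particular the Cauchy--Schwarz step $|D^2 u|^2 \geq (\Delta u)^2/n$ must saturate, forcing $D^2 u$ to be a pointwise scalar multiple of $g$, and matching traces with $\Delta u = -n u$ gives $D^2 u = -u g$.

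Finally, $\phi \equiv \mathrm{const}$ gives $\chi^2 = \phi|_\Sigma - 1$ constant on $\Sigma$; since $\Sigma$ is connected (being isometric to $\mathbb{S}^{n-1}$, $n \geq 2$), the continuous function $\chi$ itself is constant. Feeding $\partial\phi/\partial\nu \equiv 0$ and $\nabla_\Sigma\chi \equiv 0$ back into Lemma~\ref{nf} yields $H\chi^2 + \Pi(\nabla_\Sigma f,\nabla_\Sigma f) \equiv 0$ on $\Sigma$, and since both summands are pointwise nonnegative each must vanish identically, giving $\Pi(\nabla_\Sigma f,\nabla_\Sigma f) \equiv 0$ as required. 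The only real obstacle is the observation that $\nabla_\Sigma\chi$ vanishes at a boundary maximum of $\phi$ (so that Lemma~\ref{nf} becomes a sign statement); everything after that is a mechanical unwinding of the Bochner equality case and a second application of Lemma~\ref{nf}.
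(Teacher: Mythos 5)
Your proof is correct and takes essentially the same route as the paper: locate a boundary maximum of the subharmonic $\phi$, use $\phi|_\Sigma = 1+\chi^2$ together with Lemma~\ref{nf} and the convexity hypothesis to force $\partial\phi/\partial\nu \leq 0$ there, then conclude $\phi$ is constant via Hopf, and unwind the Bochner equality case and Lemma~\ref{nf} for the remaining claims. The only cosmetic difference is organizational: the paper splits into the cases $\partial\phi/\partial\nu(p)=0$ and $\partial\phi/\partial\nu(p)>0$, deriving a contradiction in the latter, whereas you establish the sign $\partial\phi/\partial\nu(p_0)\leq 0$ directly before invoking Hopf, which is a slightly cleaner packaging of the identical estimates.
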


\begin{proof}
Since $\phi$ is subharmonic, by the maximum principle $\phi$ achieves its
maximum on $\Sigma$, say at $p\in\Sigma$. Obviously we have
\[
\nabla_{\Sigma}\phi\left(  p\right)  =0,\text{ \ }\frac{\partial\phi}%
{\partial\nu}\left(  p\right)  \geq0.
\]
If $\frac{\partial\phi}{\partial\nu}\left(  p\right)  =0$, then $\phi$ must be
constant by the strong maximum principle and Hopf lemma (see \cite[page
34-35]{GT}). Then the proof of Lemma \ref{sub} implies $D^{2}u=-ug$. By
(\ref{fb}) $\chi$ is constant. It then follows from Lemma \ref{nf} that
$\Pi\left(  \nabla_{\Sigma}f,\nabla_{\Sigma}f\right)  \equiv0$.

Suppose $\frac{\partial\phi}{\partial\nu}\left(  p\right)  >0$. Then
$\chi\left(  p\right)  \neq0$, for otherwise it follows from (\ref{fb}) that
$\chi\equiv0$ and hence $\frac{\partial\phi}{\partial\nu}\left(  p\right)
\leq0$ by Lemma \ref{nf}, a contradiction. From (\ref{fb}) we conclude
$\nabla_{\Sigma}\chi\left(  p\right)  =0$. By Lemma \ref{nf}
\[
\frac{1}{2}\frac{\partial\phi}{\partial\nu}\left(  p\right)  =\left\langle
\nabla_{\Sigma}f,\nabla_{\Sigma}\chi\right\rangle \left(  p\right)  -H\chi
^{2}-\Pi\left(  \nabla_{\Sigma}f,\nabla_{\Sigma}f\right)  \leq0,
\]
here we have used the assumption that $\Sigma$ is convex, i.e. $\Pi\geq0$.
This contradicts with $\frac{\partial\phi}{\partial\nu}\left(  p\right)  >0$ again.
\end{proof}

\bigskip

Recall $f$ depends on a unit vector $\alpha\in\mathbb{S}^{n-1}$. To indicate
the dependence on $\alpha$ we will add subscript $\alpha$ to all the
quantities. Since $\Pi\left(  \nabla_{\Sigma}f_{\alpha},\nabla_{\Sigma
}f_{\alpha}\right)  \equiv0$ on $\Sigma$ for any $\alpha\in\mathbb{S}^{n-1}$
and $\left\{  \nabla_{\Sigma}f_{\alpha}:\alpha\in\mathbb{S}^{n-1}\right\}  $
span the tangent bundle $T\Sigma$ we conclude that $\Sigma$ is totally
geodesic, i.e. $\Pi=0$.

We now claim that we can choose $\alpha$ such that $\chi_{\alpha}\equiv0$.
Indeed, $\alpha\rightarrow\chi_{\alpha}$ is a continuous function on
$\mathbb{S}^{n-1}$. Clearly $u_{-\alpha}=-u_{\alpha}$ and hence $\chi
_{-\alpha}=-\chi_{\alpha}$. Therefore by the intermediate value theorem there
exists some $\beta\in\mathbb{S}^{n-1}$ such that $\chi_{\beta}\equiv0$. With
this particular choice $f=f_{\beta},u=u_{\beta}$ we have%
\[
\left\{
\begin{array}
[c]{c}%
D^{2}u=-ug,\\
\frac{\partial u}{\partial\nu}\equiv0.
\end{array}
\right.
\]
There is $q\in\Sigma$ such that $f\left(  q\right)  =\max f=1$. Then
$\nabla_{\Sigma}f\left(  q\right)  =0$ and hence $\nabla u\left(  q\right)
=0$ as $\frac{\partial u}{\partial\nu}\left(  q\right)  =0$. For $X\in T_{q}M$
such that $\left\langle X,\nu\left(  q\right)  \right\rangle \leq0$ let
$\gamma_{X}$ be the geodesic with $\overset{\cdot}{\gamma}_{X}\left(
0\right)  =X$. Note that $\gamma_{X}$ lies in $\Sigma$ if $X$ is tangential to
$\Sigma$ since $\Sigma$ is totally geodesic. The function $U\left(  t\right)
=u\circ\gamma_{X}\left(  t\right)  $ then satisfies the following%
\[
\left\{
\begin{array}
[c]{c}%
\overset{\cdot\cdot}{U}\left(  t\right)  =-U,\\
U\left(  0\right)  =1,\\
\overset{\cdot}{U}\left(  0\right)  =0.
\end{array}
\right.
\]
Hence $U\left(  t\right)  =\cos t$. Because $\Sigma$ is totally geodesic,
every point may be connected to $q$ by a minimizing geodesic. Using the
geodesic polar coordinates $\left(  r,\xi\right)  \in\mathbb{R}^{+}%
\times\mathbb{S}_{+}^{n-1}$at $q$ we can write%
\[
g=dr^{2}+h_{r}%
\]
where $r$ is the distance function to $q$ and $h_{r}$ is $r$-family of metrics
on $\mathbb{S}_{+}^{n-1}$ with
\[
\lim_{r\rightarrow0}r^{-2}h_{r}=h_{0},
\]
here $h_{0}$ is the standard metric on $\mathbb{S}_{+}^{n-1}$. Then $u=\cos r
$. The equation $D^{2}u=-ug$ implies%
\[
\frac{\partial h_{r}}{\partial r}=2\frac{\cos r}{\sin r}h_{r}%
\]
which can be solved to give $h_{r}=\sin^{2}rh_{0}$. It follows that $\left(
M,g\right)  $ is isometric to $\mathbb{S}_{+}^{n}$. This implies $\lambda
_{1}\left(  M\right)  =n$ and contradicts with the assumption $\lambda
_{1}\left(  M\right)  >n$. Theorem \ref{main} follows.

\section{\bigskip Reilly's theorem and rigidity for certain nonsmooth metrics}

To prove Theorem \ref{general}, it is natural to try the same argument of
Section 4. Namely, we take $v$ to be a linear function on $\mathbb{S}^{n}$ and
then solve
\[
\left\{
\begin{array}
[c]{ccc}%
-\Delta u=nu & \text{on} & M,\\
u=v\circ\iota & \text{on} & \Sigma.
\end{array}
\right.
\]

As before, $\phi=\left\vert \nabla u\right\vert ^{2}+u^{2}$ is subharmonic.
But when we apply the strong maximum principle to $\phi$, we inevitably have
to compare $\frac{\partial u}{\partial\nu}$ with the corresponding quantity
$\frac{\partial v}{\partial\nu}\circ\iota$. We have no idea how such a
comparison could be established. Instead, we have to take a different route.

First, we generalize Reilly's theorem to the situation where the metric $g$ is
only Lipschitz along a hypersurface. To be precise, let $M$ be a smooth
compact Riemannian manifold with $\partial M=\Sigma$, $\mathrm{Ric}\geq\left(
n-1\right)  $. Let $N$ be another smooth compact Riemannian manifold with
$\partial N=\Sigma\cup\Sigma_{1}$, $\Sigma$ and $\Sigma_{1}$ being disjoint
components, and $\mathrm{Ric}\geq\left(  n-1\right)  $. Assume $\left.
g_{M}\right\vert _{\Sigma}=\left.  g_{N}\right\vert _{\Sigma}$. Now we glue
$M$ and $N$ along $\Sigma$ to get a smooth manifold $P$ with boundary
$\Sigma_{1}$. However the metric on $P$ is only Lipschitz along $\Sigma$. Let
$\nu$ be the outer normal direction of $M$ along $\Sigma$. We have two shape
$A_{M}\left(  X\right)  =\nabla_{X}^{M}\nu,A_{N}\left(  X\right)  =\nabla
_{X}^{N}\nu$ for $X\in T\Sigma$. For $X\in T\Sigma_{1}$, $A\left(  X\right)
=\nabla_{X}\nu$, here $\nu$ is the outer normal direction for $N$ along
$\Sigma_{1}$, and $H=\operatorname*{tr}A$ is the mean curvature.

\begin{theorem}
\label{thm5.2}Assume $A_{M}\geq A_{N}$ and $H\geq0$, then $\lambda_{1}\left(
P\right)  \geq n$. If $\lambda_{1}\left(  P\right)  =n$, then $\left(
P,g\right)  $ is smooth and $\left(  P,g\right)  $ is isometric to $\left(
\mathbb{S}_{+}^{n},g_{\mathbb{S}^{n}}\right)  $.
\end{theorem}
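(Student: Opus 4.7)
The approach is to extend Reilly's integral identity to the singular setting by applying it to $M$ and $N$ separately and summing, exploiting the jump conditions at $\Sigma$. Let $u$ be a first Dirichlet eigenfunction on $(P,g)$, so $-\Delta u=\lambda_1(P)u$ weakly with $u|_{\Sigma_1}=0$ and $\int_P u^2=1$. Since the metric is smooth on each of $M$ and $N$ up to their boundary hypersurfaces, elliptic regularity gives $u\in C^\infty(\overline M)$ and $u\in C^\infty(\overline{N\setminus\Sigma_1})$; the weak formulation across $\Sigma$ encodes the transmission conditions that $u$ and $\chi:=\partial_\nu u$ (with $\nu$ the fixed outer unit normal to $M$ along $\Sigma$) are continuous across $\Sigma$.

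The Reilly identity on each smooth piece takes the form
\[
\int_M (\Delta u)^2 = \int_M |D^2 u|^2 + \int_M \mathrm{Ric}(\nabla u,\nabla u) + \int_{\partial M}\!\Big(H\chi^2 + \Pi(\nabla_\Sigma u,\nabla_\Sigma u) - 2\langle\nabla_\Sigma\chi,\nabla_\Sigma u\rangle\Big),
\]
and analogously on $N$. When summing, the outer normal of $N$ along $\Sigma$ is $-\nu$, which flips the signs of $\chi$, $H_N$ and $\Pi_N$ in the boundary term: the tangential $\langle\nabla_\Sigma\chi,\nabla_\Sigma u\rangle$ contributions cancel and the $\Sigma$-part collapses to $\int_\Sigma(H_M-H_N)\chi^2+\int_\Sigma(\Pi_M-\Pi_N)(\nabla_\Sigma u,\nabla_\Sigma u)\geq 0$ by the hypothesis $A_M\geq A_N$, while the $\Sigma_1$-part reduces to $\int_{\Sigma_1}H\chi^2\geq 0$ since $\nabla_\Sigma u=0$ there. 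Combining with $\mathrm{Ric}\geq (n-1)g$, the pointwise inequality $|D^2u|^2\geq (\Delta u)^2/n$, and the eigenvalue identities $\int_P(\Delta u)^2=\lambda_1^2$ and $\int_P|\nabla u|^2=\lambda_1$, we obtain $\lambda_1\geq n$.

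If $\lambda_1(P)=n$ every inequality saturates: on each smooth piece $D^2u=-ug$ pointwise, and the boundary integrals vanish. The function $|\nabla u|^2+u^2$ is then locally constant on each of $M,N$, and transmission continuity forces a single positive constant $c$ on $P$ (positivity follows from unique continuation, since $u\not\equiv 0$). Choosing a point $q$ at which $u$ achieves its maximum (so $\nabla u(q)=0$) and introducing geodesic polar coordinates about $q$ on the smooth piece containing $q$, the relation $D^2u=-ug$ forces $u=\sqrt c\cos r$ and $g=dr^2+\sin^2 r\,h_0$ in a neighborhood, exactly as at the end of Section 4; extending the warped product globally identifies $(P,g)$ with $\mathbb{S}^n_+$. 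The delicate point is this last step: the saturated integral inequality yields only the two vanishing integrals on $\Sigma$, not full equality of the shape operators, so one has to propagate the warped-product model across the Lipschitz interface and show a posteriori that $\Sigma$ is a smooth hypersurface of the hemisphere with $A_M=A_N$.
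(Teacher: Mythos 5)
Your derivation of the inequality $\lambda_1(P)\geq n$ is essentially identical to the paper's: apply Reilly's formula on $M$ and $N$ separately, sum, observe that the tangential terms along $\Sigma$ cancel because the outer normal of $N$ is $-\nu$, and keep the $(H_M-H_N)\chi^2$, $\langle(A_M-A_N)\nabla_\Sigma u,\nabla_\Sigma u\rangle$, and $\Sigma_1$ terms, all nonnegative by hypothesis. This part is correct.

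For the equality case, however, there is a genuine gap — and you have in fact pinpointed it yourself without closing it. Saying ``extending the warped product globally identifies $(P,g)$ with $\mathbb{S}^n_+$'' hides exactly the issue: the metric is only Lipschitz along $\Sigma$, the geodesics from your maximum point $q$ will in general cross $\Sigma$, and $D^2u=-ug$ holds on each side but not a priori in any useful distributional sense across $\Sigma$. One cannot simply continue the warped-product ansatz through a Lipschitz interface. The paper's proof spends most of its effort precisely here, through a chain of regularity statements that your sketch omits: (i) in Fermi coordinates $(r,\theta)$ along $\Sigma$, $\partial_r^2 u=-u$ on both sides, so $u(r,\theta)=a(\theta)\cos r+b(\theta)\sin r$, forcing $u\in C^\infty(P)$; (ii) $\{u=1\}$ is finite because at such a point $D^2u=-g$; (iii) $\nabla u\in C^\infty(P)$, established by a case analysis on whether $\partial_r u$ vanishes on $\Sigma$, which also shows that the first fundamental form $b_{ij}(r,\theta)$ has matching normal jets on both sides near points where $\partial_r u\neq 0$; (iv) only then does one use the flow of $X=\nabla u/|\nabla u|$ to produce global coordinates in which $b_{ij}(t,\theta)=\sin^2 t\,b_{ij}(\theta)$, and an a.e.\ argument on $\theta$ plus continuity gives smoothness of $g$ everywhere. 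Your proposal asserts the conclusion that $\Sigma$ becomes a smooth hypersurface with $A_M=A_N$, but this is exactly what must be proved, and the argument requires the regularity bootstrapping above, not just the saturated integral inequalities. A minor separate point: $c>0$ does not need unique continuation; it follows immediately from $u\not\equiv 0$ and the constancy of $|\nabla u|^2+u^2$.
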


\bigskip

\begin{proof}
Let $u\in H_{0}^{1}\left(  P\right)  $ be the first eigenfunction, then
$u\geq0$ and $-\Delta u=\lambda u$, $\lambda>0$. It follows from elliptic
regularity theory that $\left.  u\right\vert _{M}\in C^{\infty}\left(
M\right)  $, $\left.  u\right\vert _{N}\in C^{\infty}\left(  N\right)  $ and%
\[
\frac{\partial\left.  u\right\vert _{M}}{\partial\nu}=\frac{\partial\left.
u\right\vert _{N}}{\partial\nu}\quad\text{on }\Sigma\text{.}%
\]
In particular $u\in C^{1,1}\left(  P\right)  $. Applying Reilly's formula on
$M $, we get%
\begin{align}
&  \frac{1}{2}\int_{M}\left(  \left(  \Delta u\right)  ^{2}-\left\vert
D^{2}u\right\vert ^{2}\right)  d\mu\nonumber\\
&  =\frac{1}{2}\int_{M}\mathrm{Ric}\left(  \nabla u,\nabla u\right)  d\mu
+\int_{\Sigma}\Delta_{\Sigma}u\cdot\frac{\partial u}{\partial\nu}dS+\frac
{1}{2}\int_{\Sigma}H_{M}\left(  \frac{\partial u}{\partial\nu}\right)
^{2}dS\nonumber\\
&  +\frac{1}{2}\int_{\Sigma}\left\langle A_{M}\left(  \nabla_{\Sigma}u\right)
,\nabla_{\Sigma}u\right\rangle dS,\nonumber
\end{align}
here $H_{M}=\operatorname*{tr}A_{M}$. Applying the same formula on $N$ yields%
\begin{align}
&  \frac{1}{2}\int_{N}\left(  \left(  \Delta u\right)  ^{2}-\left\vert
D^{2}u\right\vert ^{2}\right)  d\mu\nonumber\\
&  =\frac{1}{2}\int_{N}\mathrm{Ric}\left(  \nabla u,\nabla u\right)  d\mu
-\int_{\Sigma}\Delta_{\Sigma}u\cdot\frac{\partial u}{\partial\nu}dS-\frac
{1}{2}\int_{\Sigma}H_{N}\left(  \frac{\partial u}{\partial\nu}\right)
^{2}dS\nonumber\\
&  -\frac{1}{2}\int_{\Sigma}\left\langle A_{N}\left(  \nabla_{\Sigma}u\right)
,\nabla_{\Sigma}u\right\rangle dS+\frac{1}{2}\int_{\Sigma_{1}}H\left(
\frac{\partial u}{\partial\nu}\right)  ^{2}dS.\nonumber
\end{align}
Summing up we get%
\begin{align*}
&  \frac{1}{2}\int_{P}\left(  \left(  \Delta u\right)  ^{2}-\left\vert
D^{2}u\right\vert ^{2}\right)  d\mu\\
&  =\frac{1}{2}\int_{P}\mathrm{Ric}\left(  \nabla u,\nabla u\right)
d\mu+\frac{1}{2}\int_{\Sigma}\left(  H_{M}-H_{0}\right)  \left(
\frac{\partial u}{\partial\nu}\right)  ^{2}dS\\
&  +\frac{1}{2}\int_{\Sigma}\left\langle \left(  A_{M}-A_{N}\right)  \left(
\nabla_{\Sigma}u\right)  ,\nabla_{\Sigma}u\right\rangle dS+\frac{1}{2}%
\int_{\Sigma_{1}}H\left(  \frac{\partial u}{\partial\nu}\right)  ^{2}dS.
\end{align*}
Note that%
\[
\left\vert D^{2}u\right\vert ^{2}=\left\vert D^{2}u-\frac{\Delta u}%
{n}g\right\vert ^{2}+\frac{\left(  \Delta u\right)  ^{2}}{n}=\left\vert
D^{2}u-\frac{\lambda u}{n}g\right\vert ^{2}+\frac{\lambda^{2}u^{2}}{n}.
\]
Hence%
\begin{align*}
&  \frac{n-1}{n}\lambda^{2}\int_{M}u^{2}d\mu\\
&  \geq\left(  n-1\right)  \int_{M}\left\vert \nabla u\right\vert ^{2}%
d\mu+\int_{M}\left\vert D^{2}u-\frac{\lambda u}{n}g\right\vert ^{2}d\mu
+\int_{\Sigma_{1}}H\left(  \frac{\partial u}{\partial\nu}\right)  ^{2}dS\\
&  \geq\left(  n-1\right)  \lambda\int_{M}u^{2}d\mu+\int_{M}\left\vert
D^{2}u-\frac{\lambda u}{n}g\right\vert ^{2}d\mu+\int_{\Sigma_{1}}H\left(
\frac{\partial u}{\partial\nu}\right)  ^{2}dS\\
&  \geq\left(  n-1\right)  \lambda\int_{M}u^{2}d\mu.
\end{align*}
Hence $\lambda\geq n$.

If $\lambda=n$, then $D^{2}u=-ug$ on both $M$ and $N$ and $H\left(
\frac{\partial u}{\partial\nu}\right)  ^{2}=0$ on $\Sigma_{1}$. Since $u>0$ in
$P\backslash\Sigma_{1}$, it follows from strong maximum principle that
$\frac{\partial u}{\partial\nu}<0$ on $\Sigma_{1}$ and hence $H=0$ on
$\Sigma_{1}$. We aim to show $\left(  P,g\right)  $ is in fact isometric to
$\left(  \mathbb{S}_{+}^{n},g_{\mathbb{S}_{+}^{n}}\right)  $. The key is to
prove that $g\in C^{\infty}$.

To continue we build some coordinates along $\Sigma$. Note for $r\geq0$, we
have a map $\Sigma\times\left[  0,\varepsilon\right)  \rightarrow M:\left(
p,r\right)  \mapsto\exp_{p}\left(  -r\nu\left(  p\right)  \right)  $ which is
an smooth embedding when $\varepsilon$ is small. If we choose a coordinate
locally on $\Sigma$, namely $\theta_{1},\cdots,\theta_{n-1}$, hence we have a
coordinate $r,\theta_{1},\cdots,\theta_{n-1}$ near $\Sigma$. Similarly using
the map $\Sigma\times\left(  -\varepsilon,0\right]  \rightarrow N:\left(
p,r\right)  \mapsto\exp_{p}\left(  -r\nu\left(  p\right)  \right)  $ we have
coordinate $r,\theta_{1},\cdots,\theta_{n-1}$ near $\Sigma$ on $P$. Note that%
\[
g=dr\otimes dr+b_{ij}\left(  r,\theta\right)  d\theta_{i}\otimes d\theta_{j}.
\]
$b_{ij}\left(  r,\theta\right)  $ is Lipschitz. We will write $u_{0}%
=\partial_{r}u,u_{i}=\partial_{i}u,u_{00}=D^{2}u\left(  \partial_{r}%
,\partial_{r}\right)  ,u_{ij}=D^{2}u\left(  \partial_{i},\partial_{j}\right)
$ etc.

From $D^{2}u=-ug$ it is easy to see $\nabla\left(  \left\vert \nabla
u\right\vert ^{2}+u^{2}\right)  =0$ on both $M$ and $N$. Since $u\in C^{1}$ we
conclude $\left\vert \nabla u\right\vert ^{2}+u^{2}=\operatorname*{const}$. By
scaling we may assume $\left\vert \nabla u\right\vert ^{2}+u^{2}=1$. We first
observe that $u\in C^{\infty}\left(  P\right)  $. Indeed, to see this we only
need to show $\partial_{r}^{m}u\left(  0^{+},\theta\right)  =\partial_{r}%
^{m}u\left(  0^{-},\theta\right)  $ for all $m$. But since $-u=u_{00}%
=\partial_{r}^{2}u$, and $u\left(  0,\theta\right)  =a\left(  \theta\right)
$, $\partial_{r}u\left(  0,\theta\right)  =b\left(  \theta\right)  $, we see
$u\left(  r,\theta\right)  =a\left(  \theta\right)  \cos r+b\left(
\theta\right)  \sin r$ for both positive and negative $r$, hence $\partial
_{r}^{m}u\left(  0^{+},\theta\right)  =\partial_{r}^{m}u\left(  0^{-}%
,\theta\right)  $ for all $m$.

Next we observe that if $u\left(  p\right)  =1$, then at $p$, $D^{2}u=-g$, it
follows that $u<1$ for other points near $p$. Hence the set $\left\{
u=1\right\}  $ is discrete. On $\left\{  u\neq1\right\}  $, $\left\vert \nabla
u\right\vert =\sqrt{1-u^{2}}$ is smooth too.

Next we claim $\nabla u$ is a smooth vector field, though apriori it seems
only belongs to Lipschitz. Indeed we have%
\[
\nabla u=u_{0}\partial_{r}+b^{ij}u_{j}\partial_{i}.
\]
We need to show $\partial_{r}^{m}\left(  b^{ij}u_{j}\right)  \left(
0^{+},\theta\right)  =\partial_{r}^{m}\left(  b^{ij}u_{j}\right)  \left(
0^{-},\theta\right)  $.

Given $p\in\Sigma$, if $\partial_{r}u\left(  p\right)  \neq0$, then it is not
zero near $p$. Note that%
\[
-ub_{ij}=u_{ij}=\partial_{ij}u+\frac{1}{2}\partial_{r}b_{ij}u_{0}-\Gamma
_{ij}^{k}u_{k}.
\]
Restricting to $r=0$ on both sides, we see $\partial_{r}b_{ij}\left(
0^{+},\theta\right)  =\partial_{r}b_{ij}\left(  0^{-},\theta\right)  $ for
$\theta$ near $\theta\left(  p\right)  $. Using%
\[
\partial_{r}b_{ij}=\frac{2}{u_{0}}\left(  \Gamma_{ij}^{k}u_{k}-ub_{ij}%
-\partial_{ij}u\right)  ,
\]
and the fact $u\in C^{\infty}$ we see $\partial_{r}^{2}b_{ij}\left(
0^{+},\theta\right)  =\partial_{r}^{2}b_{ij}\left(  0^{-},\theta\right)  $. By
induction we see $\partial_{r}^{m}b_{ij}\left(  0^{+},\theta\right)
=\partial_{r}^{m}b_{ij}\left(  0^{-},\theta\right)  $ for all $m$. Hence $g$
is smooth near $p$ and
\[
\partial_{r}^{m}\left(  b^{ij}u_{j}\right)  \left(  0^{+},\theta\right)
=\partial_{r}^{m}\left(  b^{ij}u_{j}\right)  \left(  0^{-},\theta\right)
\]
for all $m$.

Now assume $\partial_{r}u\left(  p\right)  =0$. If there exists a sequence
$p_{i}\in\Sigma$ with $\partial_{r}u\left(  p_{i}\right)  \neq0$ such that
$p_{i}\rightarrow p$, then by taking limit of what we have at $p_{i}$ we
obtain
\[
\partial_{r}^{m}\left(  b^{ij}u_{j}\right)  \left(  0^{+},\theta\left(
p\right)  \right)  =\partial_{r}^{m}\left(  b^{ij}u_{j}\right)  \left(
0^{-},\theta\left(  p\right)  \right)
\]
for all $m$.

If $\partial_{r}u\left(  q\right)  =0$ for $q\in\Sigma$ near $p$. Denote
$u\left(  0,\theta\right)  =f\left(  \theta\right)  $. Using $\partial_{r}%
^{2}=-u$ on both sides, we see $u\left(  r,\theta\right)  =f\left(
\theta\right)  \cos r$. It follows from $u_{0i}=0$ that%
\[
\partial_{r}b_{ik}b^{kj}f_{j}=-2f_{i}\tan r.
\]
Hence%
\[
\partial_{r}\left(  b^{ij}f_{j}\right)  =-b^{ik}\partial_{r}b_{kl}b^{lj}%
f_{j}=-2b^{ik}f_{k}\tan r.
\]
Note that this is true for both positive and negative $r$. Hence%
\[
\left(  b^{ij}f_{j}\right)  \left(  r,\theta\right)  =\left(  b^{ij}%
f_{j}\right)  \left(  0,\theta\right)  \cos^{2}r.
\]
Hence $\left(  b^{ij}u_{j}\right)  \left(  r,\theta\right)  =\left(
b^{ij}f_{j}\right)  \left(  0,\theta\right)  \cos^{3}r$ and it follows that
$\partial_{r}^{m}\left(  b^{ij}u_{j}\right)  \left(  0^{+},\theta\right)
=\partial_{r}^{m}\left(  b^{ij}u_{j}\right)  \left(  0^{-},\theta\right)  $
for all $m$.

In any case we have proved that $\nabla u\in C^{\infty}\left(  P\right)  $.
Let $X=\frac{\nabla u}{\left\vert \nabla u\right\vert }$, then $X$ is
$C^{\infty}$ on $\left\{  u\neq1\right\}  $. It follows from $D^{2}u=-ug$ on
both $M$ and $N$ that $\nabla_{X}X=0$ on both $M$ and $N$. For $p\in\Sigma$,
let $F\left(  p,t\right)  $ be given by $\partial_{t}F\left(  p,t\right)
=X\left(  F\left(  p,t\right)  \right)  $ and $F\left(  p,0\right)  =p$. Then
$F$ is $C^{\infty}$ as long as it is defined, besides when $F\left(
p,t\right)  \in M$ on a time interval, then it is a unit speed geodesic.

Let $\phi\left(  t\right)  =u\left(  F\left(  p,t\right)  \right)  $, then
$\phi\left(  0\right)  =0$, $\phi^{\prime}\left(  t\right)  =\left\vert \nabla
u\left(  F\left(  p,t\right)  \right)  \right\vert $, hence $\phi^{\prime
}\left(  0\right)  =1$. Since $\phi^{2}+\phi^{\prime2}=1$ and $\phi^{\prime
}\neq0$, we see $\phi^{\prime\prime}+\phi=0$. Hence $u\left(  F\left(
p,t\right)  \right)  =\sin t$. It follows that $F:\Sigma_{1}\times\left[
0,\frac{\pi}{2}\right)  \rightarrow\left\{  u\neq1\right\}  $ is a
diffeomorphism. If we choose a local coordinate $\theta_{1},\cdots
,\theta_{n-1}$ on $\Sigma_{1}$, then note that we have a coordinate
$t,\theta_{1},\cdots,\theta_{n-1}$ locally on $P$ with $\partial_{t}=X$, hence
$\left\vert \partial_{t}\right\vert =1$ and $\left\langle \partial
_{t},\partial_{i}\right\rangle =0$. It follows that%
\[
g=dt\otimes dt+b_{ij}\left(  t,\theta\right)  d\theta_{i}\otimes d\theta_{j},
\]
here $b_{ij}$ is locally Lipschitz in $\left(  t,\theta\right)  $. If $\left(
t_{0},\theta_{0}\right)  \notin\Sigma$, then we know near $\left(
t_{0},\theta_{0}\right)  $, $b_{ij}$ is smooth with $u=\sin t$. Hence%
\begin{align*}
D^{2}u  &  =-\sin t\cdot dt\otimes dt+\frac{1}{2}\cot s\cdot\partial_{t}%
b_{ij}d\theta_{i}\otimes d\theta_{j}\\
&  =-\sin t\cdot dt\otimes dt-\sin t\cdot b_{ij}\left(  t,\theta\right)
d\theta_{i}\otimes d\theta_{j}.
\end{align*}
Hence $\partial_{t}b_{ij}\left(  t,\theta\right)  =-2\tan t\cdot b_{ij}\left(
t,\theta\right)  $. For a.e. $\theta$, $\left(  t,\theta\right)  \in\Sigma$
for only a set discrete $t$'s, hence we have $b_{ij}\left(  t,\theta\right)
=b_{ij}\left(  0,\theta\right)  \cos^{2}t$. By continuity we know this is true
for all $\theta$. Therefore $g$ is smooth on $\left\{  u\neq1\right\}  $.
Since the set $\left\{  u=1\right\}  $ is finite, we see $g$ is smooth
everywhere (because by taking a limit we see there is no jump in any order of
derivatives along $\Sigma$). Hence $\left(  P,g\right)  $ must be isometric to
the standard upper half sphere.
\end{proof}

\bigskip

We can now prove Theorem \ref{general}. Let $N=\mathbb{S}_{+}^{n}%
\backslash\Omega$ and let $P$ be the smooth manifold obtained by gluing $M$
and $N$ along $\Sigma$ via the embedding $\iota:\left(  \Sigma,g_{\Sigma
}\right)  \rightarrow\partial\overline{\Omega}$. We have a Riemannian metric
on $P$ which is merely Lipschitz along $\Sigma$. \ Notice that $P$ is
spherical near its boundary $\mathbb{S}^{n-1}$. By Theorem \ref{thm5.2},
$\lambda_{1}\left(  P\right)  \geq n$. If $\lambda_{1}\left(  P\right)  =n$
then we know $\left(  P,g\right)  $ is smooth and is isometric to
$\mathbb{S}_{+}^{n}$. Assume $\lambda_{1}\left(  P\right)  >n$, then we may
find $u\in H^{1}\left(  P\right)  $ s.t.%
\[
\left\{
\begin{array}
[c]{ccc}%
-\Delta u=nu & \text{on} & P,\\
u=f & \text{on} & \mathbb{S}^{n-1},
\end{array}
\right.
\]
here $f$ is a linear function on $\mathbb{S}^{n-1}$. By elliptic regularity
$\left.  u\right\vert _{M}\in C^{\infty}\left(  M\right)  $, $\left.
u\right\vert _{N}\in C^{\infty}\left(  N\right)  $ and $u\in C^{1}\left(
P\right)  $. Let $\phi=\left\vert \nabla u\right\vert ^{2}+u^{2}$. We know
that $\phi$ is subharmonic in both $M$ and $N$. Moreover on $\Sigma$, let
$\chi=\frac{\partial u}{\partial\nu}$, we have%
\begin{align*}
\frac{1}{2}\frac{\partial\left.  \phi\right\vert _{M}}{\partial\nu}  &
=\left\langle \nabla_{\Sigma}u,\nabla_{\Sigma}\chi\right\rangle -\left\langle
A_{M}\left(  \nabla_{\Sigma}u\right)  ,\nabla_{\Sigma}u\right\rangle
+\chi\left[  -\Delta_{\Sigma}u-\left(  n-1\right)  u\right]  -H_{M}\chi^{2},\\
\frac{1}{2}\frac{\partial\left.  \phi\right\vert _{N}}{\partial\nu}  &
=\left\langle \nabla_{\Sigma}u,\nabla_{\Sigma}\chi\right\rangle -\left\langle
A_{0}\left(  \nabla_{\Sigma}u\right)  ,\nabla_{\Sigma}u\right\rangle
+\chi\left[  -\Delta_{\Sigma}u-\left(  n-1\right)  u\right]  -H_{0}\chi^{2}.
\end{align*}
Hence $\frac{\partial\left.  \phi\right\vert _{M}}{\partial\nu}\leq
\frac{\partial\left.  \phi\right\vert _{N}}{\partial\nu}$. Then for
$\varphi\in C_{c}^{\infty}\left(  P\right)  $, $\varphi\geq0$, we have%
\begin{align*}
&  \int_{P}\left\langle \nabla\phi,\nabla\varphi\right\rangle d\mu\\
&  =\int_{M}\left\langle \nabla\phi,\nabla\varphi\right\rangle d\mu+\int
_{N}\left\langle \nabla\phi,\nabla\varphi\right\rangle d\mu\\
&  =-\int_{M}\varphi\Delta\phi d\mu+\int_{\Sigma}\frac{\partial\left.
\phi\right\vert _{M}}{\partial\nu}\varphi dS-\int_{N}\varphi\Delta\phi
d\mu-\int_{\Sigma}\frac{\partial\left.  \phi\right\vert _{N}}{\partial\nu
}\varphi dS\\
&  \leq0.
\end{align*}
That is $\phi$ is subharmonic on $P$ in the distribution sense. Hence $\phi$
achieve a maximum on the boundary $\mathbb{S}^{n-1}$. At this maximum point,
we have $\frac{\partial\phi}{\partial\nu}=0$ by the same argument in Section
4. Hence $\phi$ must be equal to constant by the strong maximum principle and
$D^{2}u=-ug$ on both $M$ and $N$, $\left.  u\right\vert _{N}=$the linear
function. We may assume $\frac{\partial u}{\partial\nu}=0$ on $\mathbb{S}%
^{n-1}$. Hence $\left\vert \nabla u\right\vert ^{2}+u^{2}=1$ on $P$. It
follows from this and $D^{2}u=-ug$ on both $M$ and $N$ that $\left\{
u=\pm1\right\}  $ is finite.

The same argument as before shows that $u$ and $\nabla u$ belong to
$C^{\infty}\left(  P\right)  $. Since $\left\vert \nabla u\right\vert
=\sqrt{1-u^{2}}$ we see it is smooth on $\left\{  u\neq\pm1\right\}  $. Let
$X=\frac{\nabla u}{\left\vert \nabla u\right\vert }$, then it generates a
smooth flow on $\left\{  u\neq\pm1\right\}  $, $F\left(  p,t\right)  $ with
$\partial_{t}F\left(  p,t\right)  =X\left(  F\left(  p,t\right)  \right)  $,
$F\left(  p,0\right)  =p$. Note that here we have used the fact $\nabla u$ is
tangent to $\mathbb{S}^{n-1}$ on $\mathbb{S}^{n-1}$.

If $p\in P$, $u\left(  p\right)  \neq\pm1$, let $\phi\left(  t\right)
=u\left(  F\left(  p,t\right)  \right)  $, then $\phi^{\prime}\left(
t\right)  =\left\vert \nabla u\left(  F\left(  p,t\right)  \right)
\right\vert >0$. Hence $\phi^{\prime2}+\phi^{2}=1$. After differentiation we
get $\phi^{\prime\prime}+\phi=0$, hence $\phi\left(  t\right)  =\cos\left(
t+b\right)  $ for some $-\pi<b<0$. It exists on $\left(  -\pi-b,-b\right)  $.
Note that $\left\vert \partial_{t}F\left(  p,t\right)  \right\vert =1$, we see
$F\left(  p,t\right)  \rightarrow p_{+}$ as $t\rightarrow-b$ from the left and
$F\left(  p,t\right)  \rightarrow p_{-}$ as $t\rightarrow-\pi-b$ from the
right. In particular $u\left(  p_{+}\right)  =1$ and $u\left(  p_{-}\right)
=-1$. It follows that each orbit must have length $\pi$ and connecting some
points with value $-1$ to another point with value $1$.

For every $q$ with $u\left(  q\right)  =1$, we let%
\[
U_{q}=\cup\left\{  \text{all orbits ending at }q\right\}  .
\]
Then it is clear that $U_{q}$ is open and $\cup_{u\left(  q\right)  =1}%
U_{q}=P\backslash\left\{  u=\pm1\right\}  $. It follows from connectivity of
$P\backslash\left\{  u=\pm1\right\}  $ that there is only one $q$ with
$u\left(  q\right)  =1$. Similarly there is only one $q$ with $u\left(
q\right)  =-1$. Let $p_{+}\in\mathbb{S}^{n-1}$ with $u\left(  p_{+}\right)
=1$, $p_{-}\in\mathbb{S}^{n-1}$ with $u\left(  p_{-}\right)  =-1$, then every
orbit must start from $p_{-}$ and end at $p_{+}$. Next calculation shows that
in the interior of $M$ and $N$, $D_{X}X=0$, hence the orbits in the interior
are simply the unit speed geodesics. Let $r$ be the distance to $p_{-}$, then
near $p_{-}$ the metric%
\[
g=dr\otimes dr+\sin^{2}rb_{ij}\left(  \theta\right)  d\theta_{i}\otimes
d\theta_{j}.
\]
Here $\theta_{1},\cdots,\theta_{n-1}$ are local coordinates on $\mathbb{S}%
_{+}^{n-1}$ (viewed as in the tangent space of $\mathbb{S}_{+}^{n}$ at $p_{-}
$). Moreover $u\left(  r,\theta\right)  =-\cos r$. We have a diffeomorphism%
\[
\mathbb{S}_{+}^{n-1}\times\left(  0,\pi\right)  \rightarrow P\backslash
\left\{  p_{+},p_{-}\right\}  :\left(  \xi,t\right)  \mapsto F\left(
\exp_{p_{-}}\left(  \varepsilon\xi\right)  ,t-\varepsilon\right)
\]
for some $\varepsilon>0$ small. Hence we have a coordinate $t,\theta
_{1},\cdots,\theta_{n-1}$ on $P\backslash\left\{  p_{+},p_{-}\right\}  $.
Under this coordinate%
\[
g=dt\otimes dt+b_{ij}\left(  t,\theta\right)  d\theta_{i}\otimes d\theta_{j}.
\]
It follows from previous calculation that $u\left(  t,\theta\right)  =-\cos
t$. Note that if $\left(  t,\theta\right)  \notin\Sigma$, then it follows from
$D^{2}u=-ug$ that%
\[
\partial_{t}b_{ij}\left(  t,\theta\right)  =2\cot tb_{ij}\left(
t,\theta\right)  .
\]
For $a.e.$ $\theta$, we know $\left(  t,\theta\right)  \notin\Sigma$ except
finite many $t^{\prime}s$. Hence using $b_{ij}\left(  t,\theta\right)
=\sin^{2}tb_{ij}\left(  \theta\right)  $ for $t$ small we see $b_{ij}\left(
t,\theta\right)  =\sin^{2}tb_{ij}\left(  \theta\right)  $ for all $t$. By
continuity argument we see it is true everywhere. Hence $g$ is smooth. The
theorem follows.

\bigskip

\end{document}